\documentclass[11pt,a4paper]{article}
\usepackage[latin1]{inputenc}
\usepackage[top=1.25in, bottom=1.25in, left=1.00in, right=1.00in]{geometry}

\usepackage{paralist}
\usepackage{color}
\usepackage{tikz}
\usetikzlibrary{shapes,arrows}

\usepackage{graphicx}
\usepackage{caption}
\usepackage{subcaption}


\usepackage{amsmath,empheq}
\usepackage{amsfonts}
\usepackage{amssymb}
\usepackage{cancel}
\usepackage{bm}
\usepackage{mathrsfs}
\usepackage{setspace}

\newtheorem{proposition}{Proposition}[section]
\newtheorem{result}{Result}[section]

\newenvironment{proof}[1][Proof]{\begin{trivlist}
\item[\hskip \labelsep {\bfseries #1}]}{\end{trivlist}}

\newenvironment{remark}[1][Remark]{\begin{trivlist}
\item[\hskip \labelsep {\bfseries #1}]}{\end{trivlist}}

\newcommand{\qed}{\nobreak \ifvmode \relax \else
      \ifdim\lastskip<1.5em \hskip-\lastskip
      \hskip1.5em plus0em minus0.5em \fi \nobreak
      \vrule height0.75em width0.5em depth0.25em\fi}

\setcounter{secnumdepth}{4}
\numberwithin{equation}{section}
\numberwithin{table}{section}
\numberwithin{figure}{section}

\DeclareMathOperator*{\mydef}{\overset{def}{=}}
\DeclarePairedDelimiter\abs{\lvert}{\rvert}

\newcommand{\myprime}[1]{#1'}
\newcommand{\myds}[1]{{#1}^d}


\newcommand{\fref}[1]{\figurename~\ref{#1}}
\newcommand{\sref}[1]{Section~\ref{#1}}
\newcommand{\pref}[1]{Proposition~\ref{#1}}


\usepackage[blocks]{authblk}

\author[1]{J. Jagalur-Mohan}  \author[1]{O. Sahni} \author[2]{A. Doostan} \author[1]{A. A. Oberai}
\affil[1]{\footnotesize{Department of Mechanical, Aerospace \& Nuclear Engineering, Rensselaer Polytechnic Institute, Troy, NY}}
\affil[2]{\footnotesize{Aerospace Engineering Sciences, University of Colorado, Boulder, CO}}



\usepackage[pdfauthor={Jagalur-Mohan},
            pdftitle={Variational multiscale analysis: the fine-scale Green's function for stochastic partial differential equations},
            pdfsubject={vms-uq-gpc},
            pdfkeywords={variational multiscale method, Uncertainty quantification, generalized polynomial chaos}]{hyperref}

\title{Variational multiscale analysis: the fine-scale Green's function for stochastic partial differential equations}
\date{}

\begin{document}
\maketitle \let\thefootnote\relax\footnote{Submitted to SIAM/ASA Journal on Uncertainty Quantification}
\begin{abstract}
We present the variational multiscale (VMS) method for partial differential equations (PDEs) with stochastic coefficients and source terms. We use it as a method for generating accurate coarse-scale solutions while accounting for the effect of the unresolved fine scales through a model term that contains a fine-scale stochastic Green's function. For a natural choice of an ``optimal'' coarse-scale solution and $L^2$-orthogonal stochastic basis functions, we demonstrate that the fine-scale stochastic Green's function is intimately linked to its deterministic counterpart. In particular, 
\begin{inparaenum}[(i)] 
\item we demonstrate that whenever the deterministic fine-scale function vanishes, the stochastic fine-scale function satisfies a weaker, and discrete notion of vanishing stochastic coefficients, and
\item derive an explicit formula for the fine-scale stochastic Green's function that only involves quantities needed to evaluate the fine-scale deterministic Green's function.
\end{inparaenum}
We present numerical results that support our claims about the physical support of the stochastic fine-scale function, and demonstrate the benefit of using the VMS method when the fine-scale Green's function is approximated by an easier to implement, element Green's function.
\end{abstract}

\section{Introduction}

The comprehensive design of engineering systems requires a sound understanding of the underlying physics, along with the recognition of different kinds of uncertainties influencing its response. Often, these uncertainties are intrinsic to the physical phenomena under consideration. It is necessary to characterize and study their propagation, facilitating quantitative predictions about the uncertainty of relevant output quantities of interest. Experimental methodologies in this regard may prove to be expensive, and impractical with increasing complexity. To that end, the development of computational techniques assumes greater importance and the field of uncertainty quantification (UQ) has come into its own. 

In the probabilistic framework, representation of the uncertainties using random variables and processes is the preliminary task in any UQ method. This is mostly accomplished by relying on experimental data. The objective of the numerical technique can then be stated as the task of determining the uncertainties of the output quantities of interest and possibly their dependence on the characterized random inputs. Monte Carlo simulation (MCS) and related sampling based methods are some of the earliest approaches used for the purpose of uncertainty propagation \cite{Fi96}. However, owing to their slow convergence their use is limited. In particular, in the case of  large scale problems, where each deterministic solve at a sample point in itself is computationally intensive, they may become inefficient.

In recent years methods based on generalized polynomial chaos (gPC) \cite{XiKa02,Xi09,Xi10}, a generalization of the classical polynomial chaos \cite{SpGh89,GhSp03}, have been used extensively in UQ. These involve expressing the stochastic solution in an orthogonal polynomial basis of the input random parameters. The type of orthogonal polynomial chosen is dictated by the stochastic nature of the random input. The solution is essentially a spectral representation in the random space, and exhibits fast convergence when it depends smoothly on the random parameters. Numerical schemes using gPC typically involve a Galerkin projection onto the finite dimensional subspace spanned by the gPC basis in order to compute the expansion coefficients. This is the stochastic Galerkin approach and generally results in a coupled system for all the gPC expansion coefficients. It is labeled as intrusive, when requiring modification of the existing deterministic solver.

On the other hand, stochastic collocation methods \cite{MaYo02,XiHe05,GaZa07_a,Xi07,BaNoTe07}, which only need repetitive realizations of the deterministic solver are an attractive alternative. They satisfy the governing equations at a discrete set of points in the random space, and in this sense are similar to MCS. However, the discrete set of points in the case of stochastic collocation methods are chosen strategically using polynomial approximation theory to enhance convergence \cite{Xi10,LeKn10}. 

If the input uncertainties are characterized by a large number of random variables, then in a stochastic collocation method the set of discrete points becomes prohibitively large, thus imposing a severe computational burden. Similarly, in the context of stochastic Galerkin approaches, due to the tensor product construction of multi-dimensional bases the number of degrees of freedom grow exponentially with the number of random variables. In either case this exponential growth in computational complexity with respect to the number of input random variables  is termed as the {\it curse-of-dimensionality}. Several advances have been reported that counter this difficulty. In particular, multi-scale model reduction techniques when using stochastic Galerkin approaches have been investigated in \cite{DoEtal07} and methods exploiting low-rank or sparsity structures of quantities of interest have been presented in \cite{DoOw11,DoVaIa13,Xiu05a,Nobile08a,Doostan09,Nouy10,Hackbusch12}.

In this work, we achieve a reduction in computational complexity by developing a stochastic variational method that is designed to provide accurate solutions while using a coarse discretization in both physical and stochastic space. We recognize that by limiting our discretization to fewer elements and low-order gPC expansions, there will remain unresolved physical and stochastic features in the so called fine-scale space. However, we account for the effect of these features on the coarse-scale terms by introducing model terms. In that sense, our approach may be thought of as subgrid-modeling in the joint physical-stochastic space. We accomplish this within the framework of the variational multiscale (VMS) method \cite{Hu95,HuEtal98,HuSa07}. The VMS method provides a consistent framework for introducing such models in a variational problem, and has been extensively and effectively applied to problems in diverse areas such as compressible and incompressible fluid mechanics \cite{HuObMa01}, linear and non-linear elasticity \cite{GaHu98,KlMaSh99},  acoustics and electro-magnetics \cite{ObPi98,JaFeOb13}, albeit predominantly in a deterministic setting.

The starting point of the VMS method is the definition of a coarse-scale space and a projector $\mathcal{P}$, that maps functions into this space. This projector defines the desired coarse-scale solution and is a key part of the VMS formulation. The net result is a variational formulation posed on the coarse-scale space whose solution is guaranteed to be the continuous solution projected on to the coarse scales through the user-defined projector. This is accomplished by appending to the original weak formulation a term that captures the effect of the scales that live outside the coarse-scale space. This term comprises of the so called fine-scale Green's operator acting on the coarse-scale residual. The fine-scale Green's operator completely defines the VMS method.

In \cite{HuSa07} the authors have derived an expression for the abstract fine-scale Green's operator for variational problems posed on standard Hilbert spaces. Thereafter, for the case of deterministic PDEs, they have examined the locality of the fine-scale Green's function. The Green's function for most problems is typically non-zero across the whole physical domain, but this is not the case for the  fine-scale Green's function. It depends primarily on the projector used to construct the VMS method, and with appropriate choices, the fine-scale Green's function can be confined to smaller regions.  The locality plays an important role in designing simple operators that approximate the effect of the fine-scale Green's function, and hence make the VMS formulation a viable numerical method.

In this manuscript we extend and apply this analysis to stochastic PDEs.  We note that in the context of stochastic PDEs there have been previous studies that use the VMS method \cite{BaZa05,BaZa05b,BaZa06,GaZa07_b}, however, an explicit analysis of the fine-scale stochastic Green's operator has not been performed to the best of our knowledge. The main results of our analysis, which is performed assuming that $\mathcal{P}$ is defined by the inner-product associated with the trial-solution space and that the stochastic basis functions are $L_2$-orthogonal, are:

\begin{enumerate}
\item A statement about the locality (in physical space) of the fine-scale stochastic Green's operator. In particular we note that at the locations where the fine-scale deterministic Green's operator vanishes its stochastic counterpart need not vanish. However, at these locations when the fine-scale function is expanded in terms of the stochastic basis functions, the coefficients associated with the coarse-scale stochastic space will vanish.
\item An explicit expression for the fine-scale stochastic Green's function purely in terms of quantities required to construct the corresponding fine-scale deterministic Green's function. Implying thereby, that whenever the fine-scale deterministic Green's function is known, its stochastic counterpart can also be constructed.
\end{enumerate}
The first result suggests that even though the fine-scale stochastic Green's operator is not local in the physical space, it may be approximated by one as long its deterministic counterpart is local. As a consequence, the implementation of the VMS method is simplified immensely. We test the effect of this approximation and the performance of the resulting VMS method by numerically solving the stochastic advection-diffusion problem.

An outline of the paper is as follows. In \sref{sec:abstract_formulation} we introduce the abstract problem and explain the necessary mathematical framework. In \sref{sec:vms_formulation} we apply the VMS formulation to the abstract problem and present the two main results of this paper that link the form and properties of the fine-scale stochastic Green's function to its  deterministic counterpart. In \sref{sec:ade_eg} we consider the advection diffusion problem posed on one physical dimension and explicitly evaluate and examine the fine-scale stochastic Green's function. In \sref{subsec:numerical_results} we report numerical examples that test the performance of the VMS method when the fine-scale stochastic Green's function is approximated by the element Green's function for problems with single and multiple random variables. Finally in \sref{sec:conclusions} we end with concluding remarks.

\section{The abstract formulation} \label{sec:abstract_formulation}

\subsection{The abstract problem} \label{subsec:strong_form}

Consider a stochastic linear boundary value problem, wherein we seek a stochastic function $u(x,\omega): \mathcal{\bar{D}} \times \Omega \rightarrow \mathbb{R}$, such that the following equation holds almost surely in $\Omega$,
\begin{equation}  \label{eqn:strong_form}
     \left. \begin{aligned}
         \mathcal{L} u(x,\omega) &=f(x,\omega), \qquad & x \in \mathcal{D}, \\
         u(x,\omega) &= 0, \qquad & x \in \partial \mathcal{D}.
      \end{aligned}  \right\} 
\end{equation}
$\mathcal{L}$ is a stochastic linear  partial differential operator whose coefficients are one of the sources of uncertainty in the problem, $f(x,\omega)$ is a prescribed stochastic source term, $\mathcal{D}$ is the physical domain of the problem, and $\Omega$ is the set of elementary events ($\omega \in \Omega$).

\subsubsection{Probabilistic framework}
 For computational purposes, we will assume that the uncertainty has been parametrized using a finite set of independent real-valued random variables $ \{ \xi_i (\omega) \}_{i=1}^{q}$. Here $q$ is the number of coordinates that sufficiently characterizes the stochastic nature of the problem, and will henceforth be referred to as the stochastic dimension. $\xi(\omega) = \left(\xi_1(\omega),\cdots,\xi_q(\omega)\right)$ is a $q$-variate random vector in a properly defined probability space $\left(\Omega,\mathscr{F},\mathscr{P}\right)$, where $\mathscr{F}$ is the minimal $\sigma$-algebra of the subsets of $\Omega$, and $\mathscr{P}$ is the probability measure on $\mathscr{F}$. 
 
We know that each random variable $\xi_i$ is a map from the probability space $\left(\Omega,\mathscr{F},\mathscr{P}\right)$ to either a subset or whole of the real line. Let $\Upsilon_i \mydef \xi_i(\Omega) \subseteq \mathbb{R}$ denote the image of $\xi_i$, and $\rho_i: \Upsilon_i \rightarrow \mathbb{R}_{>0}$ its associated probability density function (PDF). The joint probability density of the random vector $\xi$ is
\begin{equation}
	 {\rho}(\xi) = \prod_{i=1}^{q} \rho_i(\xi_i),
\end{equation}
and has the support
\begin{equation}
	 {\Upsilon} = \prod_{i=1}^{q} \Upsilon_i  \subseteq \mathbb{R}^{q}.
\end{equation}
The solution to \eqref{eqn:strong_form} now admits a finite dimensional representation, {\it i.e.},
\begin{equation} 
	u(x,\xi) \mydef u\left(x,\xi_1(\omega),\cdots,\xi_q(\omega)\right) : \mathcal{\bar{D}} \times {\Upsilon} \rightarrow \mathbb{R}.
\end{equation}


\subsection{The variational formulation} \label{subsec:variational_formulation}

The appropriate space $\mathcal{V}$ to pose the variational problem is obtained by a tensor product of function spaces constructed over $\mathcal{D}$ and $\Omega$. Let $\myds{\mathcal{V}} \mydef H^1_0(\mathcal{D})$ be the first order Hilbert space consisting of functions that vanish on the boundary $\partial \mathcal{D}$, and $L^2(\Upsilon)$ be the space of random variables with finite variance with respect to the density function ${\rho}(\xi)$. The space $\mathcal{V} \mydef \myds{\mathcal{V}}   \otimes L^2(\Upsilon)$ is the linear span of elements of the form $\nu \Psi$, where $ \nu \in   \myds{\mathcal{V}} $ and  $\Psi \in L^2(\Upsilon)$, {\it i.e.},
 \begin{equation} \mathcal{V} =  \Bigl\{ v  \Big| v = \nu \Psi  ,   \forall \nu \in   \myds{\mathcal{V}} ,  \forall \Psi \in L^2(\Upsilon) \Bigr\}. \end{equation} 
The space $\mathcal{V}$ is equipped with an inner-product inherited from the respective individual function spaces. The inner-product associated with $  \myds{\mathcal{V}} \mydef H^1_0(\mathcal{D})$ is
\begin{equation} \label{eqn:det_inner_product_def}
 {(\nu,\tilde{\nu})}_{ \myds{\mathcal{V}}} \mydef \int_{x \in \mathcal{D}} \nabla \nu \cdot \nabla \tilde{\nu}  dx,   \qquad \forall \nu, \tilde{\nu} \in \myds{\mathcal{V}},
 \end{equation}
and the inner-product associated with  $L^2(\Upsilon)$ is
\begin{equation}
  {(\Psi,\tilde{\Psi})}_{L^2(\Omega)} \mydef \mathrm{E} \bigl[ \Psi \tilde{\Psi} \bigr] = \int_{\xi \in \Upsilon} \Psi \tilde{\Psi} \rho d \xi,  \qquad \forall \Psi,\tilde{\Psi} \in L^2(\Upsilon),
 \end{equation} 
where  $\mathrm{E}$ is the mathematical expectation operator.  The inner-product $\left( \cdot, \cdot \right)_{\mathcal{V}} : \mathcal{V} \times \mathcal{V} \rightarrow \mathbb{R}$ is defined as
\begin{equation} \label{eqn:inner_product_def}
   \left( v , \tilde{v} \right)_{\mathcal{V}} \mydef  \mathrm{E} \Bigl[ \int_{x \in \mathcal{D}} \nabla v \cdot \nabla \tilde{v}  dx \Bigr] = \mathrm{E} \Bigl[   {(v , \tilde{v})}_{ \myds{\mathcal{V}}} \Bigr], \qquad \forall v , \tilde{v}  \in \mathcal{V}.
\end{equation}

Let $\mathcal{V}^{*}$ be the dual of $\mathcal{V}$, and $\prescript{}{\mathcal{V}^{*}}{\bigl<}  \cdot , \cdot {\bigr>}_{\mathcal{V}} $ be the pairing between them. We define the bilinear form associated with the dual pair $\prescript{}{\mathcal{V}^{*}}{\bigl<}  \cdot , \cdot {\bigr>}_{\mathcal{V}} : \mathcal{V}^{*} \times \mathcal{V} \rightarrow \mathbb{R}$  as
\begin{equation} \label{eqn:dual_pair_Def}
   \prescript{}{\mathcal{V}^{*}}{\bigl<}  \chi , v {\bigr>}_{\mathcal{V}} \mydef  \mathrm{E} \Bigl[ \int_{x \in \mathcal{D}} \chi v dx \Bigr], \qquad \forall \chi \in \mathcal{V}^{*}, \forall v \in \mathcal{V}.
\end{equation}
Using \eqref{eqn:dual_pair_Def},  the variational statement of \eqref{eqn:strong_form} is: find $u \in \mathcal{V}$ such that,
\begin{equation} \label{eqn:variational_formulation}
	  \prescript{}{\mathcal{V}^{*}}{\bigl<}  \mathcal{L} u , w {\bigr>}_{\mathcal{V}}  = \prescript{}{\mathcal{V}^{*}}{\bigl<}  f , w {\bigr>}_{\mathcal{V}},  \qquad \forall w \in \mathcal{V}.
\end{equation}

If \eqref{eqn:strong_form} were posed in a deterministic context, $\myds{\mathcal{V}}$ would be the relevant space for its corresponding variational problem. Denoting the dual of $\myds{\mathcal{V}}$ by  ${\myds{\mathcal{V}}}^{*}$, we establish a pairing between them, $\prescript{}{ {\myds{\mathcal{V}}}^{*} }{\bigl<}  \cdot , \cdot {\bigr>}_{\myds{\mathcal{V}}}$, with the associated bilinear form $\prescript{}{ {\myds{\mathcal{V}}}^{*} }{\bigl<}  \cdot , \cdot {\bigr>}_{\myds{\mathcal{V}}} : {\myds{\mathcal{V}}}^{*} \times \myds{\mathcal{V}} \rightarrow \mathbb{R}$ defined as,
\begin{equation} \label{eqn:deterministic_dual_pair_Def}
   \prescript{}{ {\myds{\mathcal{V}}}^{*} }{\bigl<}  \lambda , \nu {\bigr>}_{\myds{\mathcal{V}}}  \mydef \int_{x \in \mathcal{D}} \lambda \nu dx, \qquad \forall \lambda \in \mathcal{V}^{d*}, \forall \nu \in \mathcal{V}^d.
\end{equation}

\subsubsection{Numerical approach}

\paragraph{Spectral stochastic discretization} 
In the context of the spectral stochastic methods, the solution $u(x,\xi)$ of \eqref{eqn:variational_formulation} is represented by an infinite series of the form
\begin{equation} \label{eqn:infinite_spectral_represenation}
u(x,\xi) = \sum_{ m \in \mathbb{N}^q_0 } \widehat{u}_{m}(x) \Phi_{m}(\xi),
\end{equation}
where $ \mathbb{N}^q_0 \mydef \big \{ (m_1,\cdots,m_q) : m_i \in \mathbb{N} \cup \{ 0 \} \big \}$ is the set of multi-indices of size $q$ defined on non-negative integers. The set $\{\Phi_{m}(\xi)\}$ is a basis for $L^2(\Upsilon)$, and consists of multi-dimensional polynomials of random variables, referred to as generalized polynomial chaos (gPC). Let $\phi_{m_i}(\xi_i)$ denote the univariate polynomial of degree $m_i$ orthogonal with respect to $\rho_i(\xi_i)$. Each basis function $\Phi_{m}(\xi)$ is a tensor product of the univariate polynomials $\phi_{m_i}(\xi_i)$,
\begin{equation}
	\Phi_{m}(\xi) = \phi_{m_1}(\xi_1) \phi_{m_2}(\xi_2) \cdots  \phi_{m_q}(\xi_q),  \qquad m = \mathbb{N}^{q}_{0}.
\end{equation}
The total order of $\Phi_{m}(\xi)$ is $\abs{m} = \sum _{i=1}^{q} m_i$. It is assumed that the univariate polynomials are normalized, which leads to the orthonormality property,
\begin{equation} \label{eqn:gPC_orthonormal_property}
	\mathrm{E}\left[ \Phi_{m}(\xi)  \Phi_{n}(\xi) \right] = \int_{\xi \in \Upsilon} \Phi_{m}(\xi) \Phi_{n}(\xi)  {\rho}(\xi) d\xi = \delta_{mn}, \qquad \forall m,n \in \mathbb{N}^{q}_{0},
\end{equation}
where $ \delta_{mn}$ is the Kronecker delta function. Using \eqref{eqn:gPC_orthonormal_property}, we can interpret $\{ \widehat{u}_{m}(x) \}$ in \eqref{eqn:infinite_spectral_represenation} as generalized Fourier coefficients obtained by the projection of $u(x,\xi)$ onto each basis function $\Phi_{m}(\xi)$, {\it i.e.},
\begin{equation} \label{eqn:generalized_fourier_coefficient}
	\widehat{u}_{m}(x) = \mathrm{E}\left[ u(x,\xi) \Phi_{m}(\xi)  \right] = \int_{\xi \in \Upsilon} u(x,\xi) \Phi_{m}(\xi) {\rho}(\xi) d\xi, \qquad m \in \mathbb{N}^{q}_{0}.
\end{equation}

\paragraph{The Galerkin projection} The finite dimensional Galerkin approximation is obtained by restricting the search in \eqref{eqn:variational_formulation} to a finite dimensional subspace $\bar{\mathcal{V}} \subset \mathcal{V}$. It will consist of only a finite number of spectral modes from the series representation  \eqref{eqn:infinite_spectral_represenation}. We define $\bar{\mathcal{V}}\mydef\mathcal{X}^h\otimes\mathcal{Y}^p$, where $\mathcal{X}^h \subset \myds{\mathcal{V}}$ is the finite dimensional space of piecewise continuous polynomials $\left\{ \Theta_i(x) \right\}_{i=1,\cdots,N^d}$ defined on a partitioning of $\mathcal{D}$, and  $\mathcal{Y}^p \subset L^2(\Upsilon)$ is the $q$-variate orthogonal polynomial space of total degree at most $p$, {\it i.e.},
\begin{equation}
	\mathcal{Y}^p \mydef \mathrm{span} \Big \{ \Phi_{m}(\xi)  \Big| m \in \mathbb{N}^{q}_{0}, \abs{m} \leq p \Big \}.
\end{equation}
The dimensionality of the space $\mathcal{Y}^p$ is,
\begin{equation}
	N^s = \frac{(q+p)!}{q!p!},
\end{equation}
which increases exponentially as a function of the dimension $q$. The space $\mathcal{X}^h$ with dimensionality $N^d$, will also be referred to by $\myds{\bar{\mathcal{V}}}$, a finite dimensional subspace of $\myds{\mathcal{V}}$.
Any $\bar{v} \in \bar{\mathcal{V}}$ can be written as,
\begin{equation}
	 \bar{v} =  \sum_{i=1}^{N^d} \sum_{\abs{m} = 0}^{ p } v_{i,m} \Theta_i(x) \Phi_m(\xi), \qquad m \in \mathbb{N}^{q}_{0}.
\end{equation}

The finite dimensional Galerkin approximation of the exact solution $u(x,\xi)$ is: find $\bar{u} \in \bar{\mathcal{V}} $ such that,
\begin{equation} \label{eqn:galerkin_approximation}
	 \prescript{}{\mathcal{V}^{*}}{\bigl<}  \mathcal{L} \bar{u} , \bar{w} {\bigr>}_{\mathcal{V}}  = \prescript{}{\mathcal{V}^{*}}{\bigl<}  f , \bar{w} {\bigr>}_{\mathcal{V}},  \qquad \forall \bar{w} \in \bar{\mathcal{V}}.
\end{equation}

\begin{remark}
\eqref{eqn:galerkin_approximation} is referred to as the spectral stochastic finite element formulation. A non-spectral discretization of the stochastic dimension leads to a similar system, and has been the subject of previous work \cite{DeBaOd01,LeMaitre04}.
\end{remark}

\section{The variational multiscale formulation} \label{sec:vms_formulation}
In this section we first review the concept of the variational multiscale formulation and the fine-scale Green's operator, and apply it to the stochastic PDE described in \sref{subsec:strong_form}. For this, we borrow ideas and closely follow the notation from \cite{HuSa07} and refer the interested reader to the same for a more elaborate discussion. Thereafter in \sref{subsec:projector}, for a specific choice of the projection  operator, we prove two propositions that relate the deterministic and stochastic VMS formulations. This leads us to the two main results of this manuscript. The first equates the vanishing of the deterministic fine scales at certain physical locations to the vanishing of the coarse-scale gPC coefficients of the stochastic fine scales at the same locations, and is presented in \sref{subsec:projector}. This provides insight into the locality of the fine-scale stochastic Green's operator, given the locality of its deterministic counterpart. The second result, which is presented in \sref{subsec:sgf}, provides an explicit expression for the fine-scale stochastic Green's function in terms of the corresponding deterministic quantities. Therefore, if the fine-scale Green's function for the deterministic problem has been determined, then the corresponding fine-scale Green's function for the stochastic problem naturally follows.

\subsection{The fine-scale Green's operator for the stochastic PDE}

As defined in the previous section, let $\bar{\mathcal{V}}$ be a closed subspace of $\mathcal{V}$, and  $\mathcal{P}$ be a linear projector onto $\bar{\mathcal{V}}$; thus, $ \mathcal{P}^{2} = \mathcal{P}$ and $ \mathrm{Range}(\mathcal{P}) = \bar{\mathcal{V}}$. We assume $\mathcal{P}$ to be continuous in $\mathcal{V}$, and further define  $\myprime{\mathcal{V}} = \mathrm{span}\{ v | v \in \mathcal{V}, \mathcal{P}v = 0 \}$.  $\myprime{\mathcal{V}}$ is indeed a closed subspace of $\mathcal{V}$, and we have the decomposition
\begin{equation}
	\mathcal{V}=\bar{\mathcal{V}}\oplus\myprime{\mathcal{V}}.
\end{equation}
From a practical perspective, $\bar{\mathcal{V}}$ is the space of computable coarse scales and $\myprime{\mathcal{V}}$ is the space of unresolved fine scales.  Any $v \in \mathcal{V}$ can be written uniquely as $v = \bar{v} + \myprime{v}$, where $\bar{v} \in \bar{\mathcal{V}}$ and $\myprime{v} \in \myprime{\mathcal{V}}$. The coarse and fine-scale components can be expressed in terms of the projector $\mathcal{P}$ as,
\begin{align}
 \label{eqn:vBar_def}   \bar{v}  &= \mathcal{P}v, \\
 \label{eqn:vPrime_def} \myprime{v} &= v -\mathcal{P}v.
\end{align}

In the standard variational problem \eqref{eqn:variational_formulation}, we seek the solution $u \in \mathcal{V}$. The solution can be split as $u = \bar{u} + \myprime{u}$. The objective of the VMS method is to construct a discrete variational problem whose solution $\bar{u} = \mathcal{P}u$. The variational formulation \eqref{eqn:variational_formulation} can be split by choosing the weighting function from the coarse and fine-scale spaces successively: find $\bar{u} \in \bar{\mathcal{V}}$, and $\myprime{u} \in \myprime{\mathcal{V}}$ such that,
\begin{align}
 \label{eqn:coarse_scale_prob}     \prescript{}{\mathcal{V}^{*}}{\bigl<} \mathcal{L}\bar{u},\bar{w}  {\bigr>}_{\mathcal{V}} +  \prescript{}{\mathcal{V}^{*}}{\bigl<} \mathcal{L} \myprime{u}, \bar{w} {\bigr>}_{\mathcal{V}} & =  \prescript{}{\mathcal{V}^{*}}{\bigl<} f, \bar{w} {\bigr>}_{\mathcal{V}}, \qquad \forall \bar{w} \in \bar{\mathcal{V}},\\
 \label{eqn:fine_scale_prob}      \prescript{}{\mathcal{V}^{*}}{\bigl<}  \mathcal{L}\bar{u},\myprime{w} {\bigr>}_{\mathcal{V}} +  \prescript{}{\mathcal{V}^{*}}{\bigl<} \mathcal{L} \myprime{u},\myprime{w} {\bigr>}_{\mathcal{V}} &=  \prescript{}{\mathcal{V}^{*}}{\bigl<} f,\myprime{w} {\bigr>}_{\mathcal{V}}, \qquad \forall \myprime{w} \in \myprime{\mathcal{V}}. 
\end{align}
We label \eqref{eqn:coarse_scale_prob} as the coarse-scale problem and assume that it is well posed for $\bar{u}$ alone; {\it i.e.}, given $\myprime{u}$ and $f$ we can determine a unique $\bar{u} \in \bar{\mathcal{V}}$. Analogously, \eqref{eqn:fine_scale_prob} is labeled as the fine-scale problem and is assumed to be well posed for $\myprime{u}$ alone.

In \eqref{eqn:fine_scale_prob}, the fine-scale Green's operator $\myprime{\mathcal{G}} : \mathcal{V}^{*} \rightarrow \myprime{\mathcal{V}}$ yields $\myprime{u}$ from the coarse-scale residual $ f -\mathcal{L}\bar{u}$, {\it i.e.},
\begin{equation}
	  \myprime{u} = \myprime{\mathcal{G}}( f -\mathcal{L}\bar{u}).
\end{equation}
Using $\myprime{\mathcal{G}}$ we can eliminate $\myprime{u}$ from \eqref{eqn:coarse_scale_prob} to obtain the VMS formulation for $\bar{u}$: find $\bar{u} \in \bar{\mathcal{V}}$ such that,
	\begin{equation}
		  \prescript{}{\mathcal{V}^{*}}{\bigl<} \mathcal{L}\bar{u},\bar{w}  {\bigr>}_{\mathcal{V}} -  \underbrace{ \prescript{}{\mathcal{V}^{*}}{\bigl<} \mathcal{L}\myprime{\mathcal{G}}\mathcal{L}\bar{u},\bar{w}  {\bigr>}_{\mathcal{V}}}_{VMS \quad contribution} =  \prescript{}{\mathcal{V}^{*}}{\bigl<} f, \bar{w} {\bigr>}_{\mathcal{V}} - \underbrace{ \prescript{}{\mathcal{V}^{*}}{\bigl<} \mathcal{L}\myprime{\mathcal{G}} f, \bar{w} {\bigr>}_{\mathcal{V}}}_{VMS \quad contribution}, \qquad \forall \bar{w} \in \bar{\mathcal{V}}. 
	\end{equation}

In \cite{HuSa07}, an expression for the fine-scale Green's operator $\myprime{\mathcal{G}}$ was derived, and  is given by,
\begin{equation}\label{eqn:fine_greens_infinite}
	\myprime{\mathcal{G}} = \mathcal{G} - \mathcal{G}\mathcal{P}^{T}(\mathcal{P}\mathcal{G}\mathcal{P}^{T})^{-1}\mathcal{P}\mathcal{G},
\end{equation}
where $\mathcal{P}^{T}: {\bar{\mathcal{V}}}^{*} \rightarrow \mathcal{V}^{*}$ denotes the adjoint of $\mathcal{P}$, and is defined as,
\begin{equation} \label{eqn:projector_adjoint_def}
			\prescript{}{\mathcal{V}^{*}}{\bigl<} \mathcal{P}^{T}\bar{\mu},v {\bigr>}_{\mathcal{V}}   =  \prescript{}{ \bar{\mathcal{V}}^{*} }{\bigl<} \bar{\mu},\mathcal{P}v {\bigr>}_{\bar{\mathcal{V}}},  \qquad  \forall v \in  \mathcal{V}, \bar{\mu} \in \bar{\mathcal{V}}^{*}.
\end{equation}
In \eqref{eqn:projector_adjoint_def} $\bar{\mathcal{V}}^{*}$ is the dual of $\bar{\mathcal{V}}$ and $\prescript{}{ \bar{\mathcal{V}}^{*} }{\bigl<} \cdot , \cdot  {\bigr>}_{\bar{\mathcal{V}}}$ is the pairing between them. Using \eqref{eqn:fine_greens_infinite} it is easy to show that,
\begin{equation} \label{eqn:fine_greens_infinite_properties}
	\myprime{\mathcal{G}}\mathcal{P}^{T} = 0, \qquad \mathrm{and} \qquad \mathcal{P}\myprime{\mathcal{G}}=0.
\end{equation}
 
In practical computational studies, the coarse space $\bar{\mathcal{V}}$ is a finite dimensional subspace of $\mathcal{V}$. For this case an alternate, and arguably more useful expression for the fine-scale Green's operator may be derived as follows. If $ N = \mathrm{dim}(\bar{\mathcal{V}})$, we can determine a set of linear functionals  $\{ \mu_i \}_{i=1 , \cdots , N}$ such that, $\forall v \in \mathcal{V}$,
\begin{equation} \label{eqn:mu_def}
	\prescript{}{\mathcal{V}^{*}}{\bigl<}  \mu_{i},v {\bigr>}_{\mathcal{V}} = 0, \quad  \forall i =  1, \cdots , N  \qquad \Leftrightarrow \qquad \mathcal{P}v = 0
\end{equation}
Using \eqref{eqn:mu_def} we can characterize $v$ as a fine-scale function, {\it i.e.} $v \in \myprime{\mathcal{V}}$. The set $\{ \mu_i \}_{i=1 , \cdots , N}$ can be inferred to be a basis for the image of $\mathcal{P}^{T}$ from \eqref{eqn:projector_adjoint_def}.  The discrete counterparts of the properties in \eqref{eqn:fine_greens_infinite_properties} are now given by,

\begin{equation} \label{eqn:fine_greens_finite_property_1}
	\myprime{\mathcal{G}} \mu_{i} = 0, \quad  \forall i =  1, \cdots , N ,
\end{equation}  
and
\begin{equation} \label{eqn:fine_greens_finite_property_2}
\prescript{}{\mathcal{V}^{*}}{\bigl<}   \mu_i,  \myprime{\mathcal{G}} \chi {\bigr>}_{\mathcal{V}} = 0, \quad \forall \chi \in \mathcal{V}^{*}, \forall i =  1, \cdots , N .
\end{equation}  
An expression for $\myprime{\mathcal{G}}$ for the finite dimensional case equivalent to \eqref{eqn:fine_greens_infinite}, can now be constructed in terms of the vector $\bm{\mu} \in (\mathcal{V}^{*})^{N}$,
\begin{equation} 
      \bm{\mu}^{T} =  \begin{bmatrix}   \mu_1  & \cdots &  \mu_N   \end{bmatrix} ,
\end{equation}
the vector $\mathcal{G}\bm{\mu}^{T}$,
\begin{equation} 
	\mathcal{G}\bm{\mu}^{T} =  \begin{bmatrix}   \mathcal{G} \mu_1  & \cdots & \mathcal{G} \mu_N  \end{bmatrix} ,
\end{equation}
the matrix $ \bm{\mu}\mathcal{G}\bm{\mu}^{T}  \in \mathbb{R}^{N \times N}$,
\begin{equation} 
 \bm{\mu}\mathcal{G}\bm{\mu}^{T} =  \begin{bmatrix}
  \prescript{}{\mathcal{V}^{*}}{\bigl<}   \mu_1,  \mathcal{G} \mu_1 {\bigr>}_{\mathcal{V}} & \cdots & \prescript{}{\mathcal{V}^{*}}{\bigl<}   \mu_1,  \mathcal{G} \mu_N {\bigr>}_{\mathcal{V}} \\
  \vdots  & \ddots & \vdots  \\
  \prescript{}{\mathcal{V}^{*}}{\bigl<}   \mu_N,  \mathcal{G} \mu_1 {\bigr>}_{\mathcal{V}}  & \cdots & \prescript{}{\mathcal{V}^{*}}{\bigl<}   \mu_N,  \mathcal{G} \mu_N {\bigr>}_{\mathcal{V}}
\end{bmatrix},  
\end{equation}
and the vector of functionals $\bm{\mu} \mathcal{G} : \mathcal{V}^{*} \rightarrow \mathbb{R}^{N}$ such that, $\forall \chi \in \mathcal{V}^{*}$
\begin{equation}  
 	\bm{\mu}\mathcal{G}(\chi) = 
	 \begin{bmatrix}
  	\prescript{}{\mathcal{V}^{*}}{\bigl<}   \mu_1,  \mathcal{G} \chi {\bigr>}_{\mathcal{V}} \\
  	\vdots  \\
  	\prescript{}{\mathcal{V}^{*}}{\bigl<}   \mu_N,  \mathcal{G} \chi {\bigr>}_{\mathcal{V}}
 \end{bmatrix}.
 \end{equation}
The expression for $\myprime{\mathcal{G}}$ in the finite dimensional case is
\begin{equation} \label{eqn:fine_greens_finite}
 \myprime{\mathcal{G}} = \mathcal{G} - \mathcal{G}\bm{\mu}^{T}[\bm{\mu}\mathcal{G}\bm{\mu}^{T}]^{-1}\bm{\mu}\mathcal{G} .
 \end{equation}
From \eqref{eqn:fine_greens_finite}, it is clear that in order to determine $\myprime{\mathcal{G}}$, we require an appropriately defined  Green's operator $\mathcal{G}$, and the set of linear functionals $\bm{\mu}$. In what follows, we demonstrate that if these are known for the deterministic problem, then their stochastic counterparts can also be constructed.


\subsection{An orthogonal projector for the stochastic problem} \label{subsec:projector}

The projector $\mathcal{P}$ is one of the key ingredients of the VMS formulation. This is because  the fine-scale Green's function is constructed to ensure that the solution to the coarse-scale problem is the function $\mathcal{P}u$. Given that the solution to \eqref{eqn:variational_formulation} resides in the Hilbert space $\mathcal{V}$, a natural choice for $\mathcal{P}$ (but not the only choice) is to select it to be the orthogonal projector associated with the inner-product $(\cdot, \cdot)_{\mathcal{V}}$ defined in \eqref{eqn:inner_product_def}. In this manuscript we make this choice, {\it i.e.} we select $\mathcal{P}$ such that,
\begin{equation} \label{eqn:projector_def}
			(\bar{w},\mathcal{P}u)_{\mathcal{V}}  =  (\bar{w} ,u )_{\mathcal{V}},  \qquad \forall \bar{w} \in \bar{\mathcal{V}}, \forall u \in {\mathcal{V}}.
\end{equation}
It will be useful to define a deterministic, or a physical-space component of $\mathcal{P}$, denoted by $\myds{\mathcal{P}} : \myds{\mathcal{V}} \rightarrow \myds{\bar{\mathcal{V}}}$, such that 
\begin{equation}\label{eqn:projector_det_def}
 {(\bar{\vartheta},\myds{\mathcal{P}} \nu)}_{\myds{\mathcal{V}}} =  {(\bar{\vartheta}, \nu)}_{\myds{\mathcal{V}}},  \qquad \forall \bar{\vartheta} \in \myds{\bar{\mathcal{V}}}, \forall \nu \in \myds{\mathcal{V}}.
 \end{equation}
 where the inner-product $(\cdot,\cdot)_{_{\myds{\mathcal{V}}}}$ is defined in \eqref{eqn:det_inner_product_def}.

In the remainder of this section we first establish a relation between $\mathcal{P}$ and $\myds{\mathcal{P}}$ in Proposition 3.1. Thereafter we use this to determine an explicit relation between the functionals used to characterize the stochastic and deterministic fine-scale spaces in Proposition 3.2. Finally, in Result 3.1 we use these to prove that, at certain physical locations when the deterministic fine scales vanish (and the coarse scales are exact), the stochastic fine scales have vanishing coarse-scale gPC coefficients.


\begin{proposition} \label{prop:projector}
Any element $v \in \mathcal{V}$ expressed using an infinite series of the form,
\begin{equation} \label{eqn:v_expansion}
v(x,\xi) = \sum_{\abs{m}=0}^{\infty} \widehat{v}_m(x) \Phi_m(\xi),
\end{equation}
admits the following series representation for its coarse-scale projection $\mathcal{P}v \in \bar{\mathcal{V}}$,
\begin{equation} \label{eqn:P_Pd}
\mathcal{P}v = \sum_{\abs{m}=0}^{p}  \myds{\mathcal{P}} \widehat{v}_m(x) \Phi_m(\xi),\end{equation}
where $\mathcal{P}$ and $\myds{\mathcal{P}}$ are defined in \eqref{eqn:projector_def} and \eqref{eqn:projector_det_def}, respectively.
\end{proposition}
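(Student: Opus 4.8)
The plan is to exploit the characterization of $\mathcal{P}$ as the \emph{unique} orthogonal projector onto $\bar{\mathcal{V}}$ with respect to $(\cdot,\cdot)_{\mathcal{V}}$, so that it suffices to exhibit one element of $\bar{\mathcal{V}}$ satisfying the defining Galerkin identity \eqref{eqn:projector_def} and then invoke uniqueness. I would take as candidate
\[
w^{\star} \mydef \sum_{\abs{m}=0}^{p} \myds{\mathcal{P}}\,\widehat{v}_m(x)\,\Phi_m(\xi),
\]
which is exactly the proposed right-hand side of \eqref{eqn:P_Pd}, and establish two things: that $w^{\star}\in\bar{\mathcal{V}}$, and that $(\bar{w},w^{\star})_{\mathcal{V}}=(\bar{w},v)_{\mathcal{V}}$ for every $\bar{w}\in\bar{\mathcal{V}}$. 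Membership is the easy half: each $\myds{\mathcal{P}}\widehat{v}_m(x)$ lies in $\myds{\bar{\mathcal{V}}}=\mathcal{X}^h$ by the definition of $\myds{\mathcal{P}}$, and each retained $\Phi_m$ with $\abs{m}\le p$ lies in $\mathcal{Y}^p$, so every summand, and hence $w^{\star}$, belongs to $\mathcal{X}^h\otimes\mathcal{Y}^p=\bar{\mathcal{V}}$.

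The crux is the Galerkin identity, and by bilinearity it is enough to verify it on the product basis $\bar{w}=\Theta(x)\Phi_n(\xi)$ with $\Theta\in\mathcal{X}^h$ and $\abs{n}\le p$. The single observation that drives everything is that the inner product \eqref{eqn:inner_product_def} differentiates only in the physical variable, so the stochastic factors $\Phi_m(\xi)$ pass through $\nabla$ as scalars. Inserting the expansion \eqref{eqn:v_expansion} and interchanging the spatial integral, the expectation, and the sum, I would reduce $(\Theta\Phi_n,v)_{\mathcal{V}}$ to a double sum whose stochastic content is precisely $\mathrm{E}[\Phi_n\Phi_m]$; the $L^2$-orthonormality \eqref{eqn:gPC_orthonormal_property} then collapses this to $\delta_{nm}$, leaving $(\Theta\Phi_n,v)_{\mathcal{V}}=(\Theta,\widehat{v}_n)_{\myds{\mathcal{V}}}$. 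Repeating the identical computation with $w^{\star}$ in place of $v$ gives $(\Theta\Phi_n,w^{\star})_{\mathcal{V}}=(\Theta,\myds{\mathcal{P}}\widehat{v}_n)_{\myds{\mathcal{V}}}$, where the truncation $\abs{m}\le p$ in $w^{\star}$ is harmless since $\abs{n}\le p$ selects the same surviving mode. Finally, because $\Theta\in\myds{\bar{\mathcal{V}}}$, the defining property \eqref{eqn:projector_det_def} of the deterministic projector yields $(\Theta,\myds{\mathcal{P}}\widehat{v}_n)_{\myds{\mathcal{V}}}=(\Theta,\widehat{v}_n)_{\myds{\mathcal{V}}}$; the two sides agree, and uniqueness of the orthogonal projection delivers $\mathcal{P}v=w^{\star}$.

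I expect the main obstacle to be analytic rather than algebraic: rigorously justifying the interchange of expectation, spatial integration, and the infinite summation, and the reduction from a general $\bar{w}$ to the product basis. These are controlled by the $L^2(\Upsilon)$ convergence of the series \eqref{eqn:v_expansion}, the continuity of $(\cdot,\cdot)_{\mathcal{V}}$, and the fact that finite linear combinations of the $\Theta_i\Phi_n$ span $\bar{\mathcal{V}}$; I would note these points but not belabor them. A secondary point worth stating explicitly is that the orthonormality \eqref{eqn:gPC_orthonormal_property} is exactly the hypothesis that decouples the stochastic modes and makes the projection act mode-by-mode, so that the clean factorization \eqref{eqn:P_Pd} of $\mathcal{P}$ into a stochastic truncation followed by the deterministic $\myds{\mathcal{P}}$ genuinely hinges on the $L^2$-orthogonal basis assumption.
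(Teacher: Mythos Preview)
Your proposal is correct and is essentially the same argument as the paper's: both test against product functions $\Theta(x)\Phi_n(\xi)$, use the orthonormality \eqref{eqn:gPC_orthonormal_property} to isolate the $n$-th stochastic mode, and then invoke the defining relation \eqref{eqn:projector_det_def} of $\myds{\mathcal{P}}$. The only cosmetic difference is that the paper expands the unknown $\mathcal{P}v$ in the coarse basis and identifies its coefficients, whereas you write down the candidate $w^{\star}$ first and verify the Galerkin identity plus uniqueness; the underlying computation is identical.
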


\begin{proof}
Any element $\bar{w} \in \bar{\mathcal{V}} $ can be written as
\begin{equation}
\label{eqn:w_coarse} \bar{w} = \widehat{\bar{w}}_{k}(x) \Phi_m(\xi),
\end{equation}
where $\widehat{\bar{w}}_{k}(x) \in \myds{\mathcal{V}}$ and $ \Phi_m(\xi) \in L^2(\Upsilon)$.
Since $\mathcal{P}v \in \bar{\mathcal{V}} $ it admits the representation,
\begin{equation}
\label{eqn:pvcoarse_expansion} \mathcal{P}v = \sum_{\abs{n}=0}^{p} \widehat{\mathcal{P}v}_n(x) \Phi_n(\xi).
\end{equation}
Using \eqref{eqn:w_coarse},\eqref{eqn:pvcoarse_expansion} and \eqref{eqn:v_expansion} in \eqref{eqn:projector_def} we have,
\begin{equation} \label{eqn:ps_weak_form_1}
\Bigl( \widehat{\bar{w}}_{k}(x) \Phi_m(\xi) , \sum_{\abs{n}=0}^{p} \widehat{\mathcal{P}v}_n(x) \Phi_n(\xi)  \Bigr)_{\mathcal{V}}  = 
  \Bigl( \widehat{\bar{w}}_{k}(x) \Phi_m(\xi) , \sum_{\abs{l}=0}^{\infty} \widehat{v}_l(x) \Phi_l(\xi) \Bigr)_{\mathcal{V}} , \qquad  \forall \widehat{\bar{w}}_{k}(x) \Phi_m(\xi) \in \mathcal{V}.
\end{equation}
Using \eqref{eqn:inner_product_def} and the orthogonality of $\{ \Phi_m \}_{\abs{m}=0,\cdots,p}$ in \eqref{eqn:ps_weak_form_1} we have, 
\begin{equation} \label{eqn:ps_weak_form_2} 
\Bigl( \widehat{\bar{w}}_{k}(x),  \widehat{\mathcal{P}v}_m(x) \Bigr)_{\myds{\mathcal{V}}}  =  \Bigl( \widehat{\bar{w}}_{k}(x) , \widehat{v}_m(x) \Bigr)_{\myds{\mathcal{V}}} , \qquad  \forall \widehat{\bar{w}}_{k} \in \myds{\bar{\mathcal{V}}}.
\end{equation}
From \eqref{eqn:projector_det_def} the above equation implies $\widehat{\mathcal{P}v}_m(x) = \myds{\mathcal{P}} \widehat{v}_m(x)$.
\qed
\end{proof}


\begin{proposition}  \label{prop:mu}
Let $\{ \myds{\mu}_{i} \}_{i=1, \cdots , \myds{N}} \in {\myds{\mathcal{V}}}^{*} $ be a set of deterministic linear functionals  such that, $\forall \nu \in \myds{\mathcal{V}}$,
\begin{equation} \label{eqn:mud_def}
 \prescript{}{{\myds{\mathcal{V}}}^{*}}{\bigl<} \myds{\mu}_{i},\nu {\bigr>}_{\myds{\mathcal{V}}} = 0, \quad \forall i=1, \cdots , \myds{N} \qquad \Leftrightarrow \qquad \myds{\mathcal{P}}\nu = 0.
\end{equation}
We can then determine $ \{ {\mu}_{i,m} \}_{{\setstretch{0.75} \begin{array}{c@{\scriptstyle =}l} \scriptstyle i & \scriptstyle 1, \cdots , \myds{N} \\  \scriptstyle \abs{m} & \scriptstyle 0,\cdots,p \end{array}}} $,
\begin{equation} \label{eqn:mu_mud}
{\mu}_{i,m}(x,\xi) \mydef \myds{\mu}_{i}(x) \Phi_m(\xi),
\end{equation}
such that $\forall v \in \mathcal{V}$,
\begin{equation} \label{eqn:mu_tensor_def}
 \prescript{}{\mathcal{V}^{*}}{\bigl<} {\mu}_{i,m},v {\bigr>}_{\mathcal{V}}=0, \quad \forall \begin{array}{c@{=}l} i & 1, \cdots , \myds{N} \\  \abs{m} & 0,\cdots,p \end{array} \qquad \Leftrightarrow \qquad \mathcal{P}v=0.
\end{equation}\end{proposition}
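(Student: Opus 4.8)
The plan is to collapse the stochastic dual pairing against the tensored functional $\mu_{i,m}=\myds{\mu}_i\Phi_m$ onto the purely deterministic pairing against $\myds{\mu}_i$, and then to read off the claimed equivalence \eqref{eqn:mu_tensor_def} mode by mode using the hypothesis \eqref{eqn:mud_def} together with \pref{prop:projector}. First I would take an arbitrary $v\in\mathcal{V}$, expand it in the gPC basis as $v(x,\xi)=\sum_{\abs{l}=0}^{\infty}\widehat{v}_l(x)\Phi_l(\xi)$, and substitute both this expansion and the product form \eqref{eqn:mu_mud} into the definition \eqref{eqn:dual_pair_Def} of $\prescript{}{\mathcal{V}^{*}}{\bigl<}\cdot,\cdot{\bigr>}_{\mathcal{V}}$. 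Carrying the expectation through the series and invoking the orthonormality \eqref{eqn:gPC_orthonormal_property} of $\{\Phi_m\}$ annihilates every term except $l=m$, leaving the key identity
\begin{equation*}
\prescript{}{\mathcal{V}^{*}}{\bigl<}\mu_{i,m},v{\bigr>}_{\mathcal{V}} = \int_{x\in\mathcal{D}}\myds{\mu}_i(x)\,\widehat{v}_m(x)\,dx = \prescript{}{{\myds{\mathcal{V}}}^{*}}{\bigl<}\myds{\mu}_i,\widehat{v}_m{\bigr>}_{\myds{\mathcal{V}}}.
\end{equation*}
This reduction is the real content of the proposition: testing $v$ against $\mu_{i,m}$ is nothing more than testing the $m$-th gPC coefficient $\widehat{v}_m\in\myds{\mathcal{V}}$ against the deterministic functional $\myds{\mu}_i$.

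With this identity the equivalence decouples across the finitely many modes $\abs{m}\le p$. The left-hand condition of \eqref{eqn:mu_tensor_def}, namely $\prescript{}{\mathcal{V}^{*}}{\bigl<}\mu_{i,m},v{\bigr>}_{\mathcal{V}}=0$ for all $i=1,\dots,\myds{N}$ and all $\abs{m}\le p$, is by the identity exactly $\prescript{}{{\myds{\mathcal{V}}}^{*}}{\bigl<}\myds{\mu}_i,\widehat{v}_m{\bigr>}_{\myds{\mathcal{V}}}=0$ for every such $i$ and $m$. Fixing $m$ and applying the deterministic characterization \eqref{eqn:mud_def} converts this into $\myds{\mathcal{P}}\widehat{v}_m=0$ for each $\abs{m}\le p$. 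Finally \pref{prop:projector} supplies $\mathcal{P}v=\sum_{\abs{m}=0}^{p}\myds{\mathcal{P}}\widehat{v}_m(x)\Phi_m(\xi)$, so the simultaneous vanishing of all these coefficients is equivalent to $\mathcal{P}v=0$, which closes the chain of equivalences.

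The step I expect to require the most care is the reverse implication in this last equivalence: deducing $\myds{\mathcal{P}}\widehat{v}_m=0$ for each individual $m$ from the single statement $\mathcal{P}v=0$. This needs the linear independence of $\{\Phi_m\}_{\abs{m}\le p}$, so that a vanishing gPC expansion forces each coefficient $\myds{\mathcal{P}}\widehat{v}_m\in\myds{\bar{\mathcal{V}}}$ to vanish separately; this is guaranteed because $\{\Phi_m\}$ is a basis of $\mathcal{Y}^p$. The only other place demanding attention is the interchange of the expectation with the infinite gPC series used to derive the key identity, which is legitimate since $v$ lies in $L^2(\Upsilon)$ in the stochastic variable and the orthonormality \eqref{eqn:gPC_orthonormal_property} leaves a single surviving term. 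Apart from these two bookkeeping points the argument is mechanical, the essential idea being the tensor reduction of the stochastic pairing to its deterministic analogue.
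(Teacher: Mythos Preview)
Your proposal is correct and follows essentially the same route as the paper: both reduce $\prescript{}{\mathcal{V}^{*}}{\bigl<}\mu_{i,m},v{\bigr>}_{\mathcal{V}}$ to $\prescript{}{{\myds{\mathcal{V}}}^{*}}{\bigl<}\myds{\mu}_i,\widehat{v}_m{\bigr>}_{\myds{\mathcal{V}}}$ via the gPC expansion and orthonormality, then invoke \eqref{eqn:mud_def} and \pref{prop:projector}. The only cosmetic difference is that for the direction $\mathcal{P}v=0\Rightarrow\myds{\mathcal{P}}\widehat{v}_m=0$ the paper writes $v=u-\mathcal{P}u$ and uses idempotency of $\myds{\mathcal{P}}$, whereas you obtain it more directly from the linear independence of $\{\Phi_m\}_{\abs{m}\le p}$ applied to the expansion in \pref{prop:projector}; your packaging as a single chain of equivalences is slightly tidier but not substantively different.
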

In \eqref{eqn:mu_tensor_def} we have used a tensor notation to index the functionals $\mu$'s relevant for the stochastic problem, in contrast to \eqref{eqn:mu_def}, where they were first introduced using a single index. This change will be found convenient in elucidating the forthcoming results.

\begin{proof}

We first show $\mathcal{P}v = 0  \quad \Rightarrow \quad \prescript{}{\mathcal{V}^{*}}{\bigl<}  {\mu}_{i,m},v  {\bigr>}_{\mathcal{V}}=0$. Since $\mathcal{P}v = 0$, we conclude that $v \in \mathcal{V}'$, and from \eqref{eqn:vPrime_def}  we have,
\begin{equation} \label{eqn:vPrime_def_duplicate}  v = u - \mathcal{P}u, \end{equation}
where $ u \in\mathcal{V}$. Using the series expansions for $u$ and $\mathcal{P}u$ from \pref{prop:projector} in \eqref{eqn:vPrime_def_duplicate} we have,
\begin{align}
v(x,\xi) &= \sum_{\abs{n}=0}^{\infty} \widehat{u}_n(x) \Phi_n(\xi) - \sum_{\abs{n}=0}^{p}  \myds{\mathcal{P}} \widehat{u}_n(x) \Phi_n(\xi), \nonumber \\
\label{eqn:vprime_expanded_2} &= \sum_{\abs{n}=0}^{p} (\widehat{u}_n(x) - \myds{\mathcal{P}} \widehat{u}_n(x) )\Phi_n(\xi) + \sum_{\abs{n}= p+1}^{\infty} \widehat{u}_n(x) \Phi_n(\xi). 
\end{align}
From the above equation we conclude that for $\abs{n} \le p$,
\begin{equation}
\label{eq_prop1} \myds{\mathcal{P}} \widehat{v}_n(x) = \myds{\mathcal{P}}(\widehat{u}_n(x) - \myds{\mathcal{P}} \widehat{u}_n(x) )  = 0.
\end{equation}
We now consider $ \prescript{}{\mathcal{V}^{*}}{\bigl<}  {\mu}_{i,m},v {\bigr>}_{\mathcal{V}}$ . Using \eqref{eqn:mu_mud}  we have,
\begin{align}
\label{eqn:sdp_expanded_1} \prescript{}{\mathcal{V}^{*}}{\Bigl<}  {\mu}_{i,m},v {\Bigr>}_{\mathcal{V}}
&=  \prescript{}{\mathcal{V}^{*}}{\Bigl<}  \myds{\mu}_{i}(x) \Phi_m(\xi)  , \sum_{\abs{n}=0}^{\infty} \widehat{v}_n(x) \Phi_n(\xi) {\Bigr>}_{\mathcal{V}}, \\
\label{eqn:sdp_expanded_2} &= \sum_{\abs{n}=0}^{\infty} \mathrm{E} \bigl[  \Phi_m(\xi) \Phi_n(\xi) \bigr] \prescript{}{ {\myds{\mathcal{V}}}^{*} }{\bigl<}  \myds{\mu}_{i}(x)  ,  \widehat{v}_n(x)  {\bigr>}_{\myds{\mathcal{V}}} , \\
\label{eqn:sdp_expanded_3} &=\prescript{}{ {\myds{\mathcal{V}}}^{*} }{\bigl<}  \myds{\mu}_{i}(x)  ,  \widehat{v}_m(x)  {\bigr>}_{\myds{\mathcal{V}}}  = 0.
\end{align}
In the development above to derive \eqref{eqn:sdp_expanded_2} we have made use of the relation between the duality pairing for the stochastic and deterministic function spaces, to derive \eqref{eqn:sdp_expanded_3} we have made use of the orthonormality of the stochastic basis functions, \eqref{eq_prop1} and \eqref{eqn:mud_def}.

We now show  $\mathcal{P}v = 0 \Leftarrow \quad \prescript{}{\mathcal{V}^{*}}{\bigl<}  {\mu}_{i,m},v  {\bigr>}_{\mathcal{V}}=0$ to complete the proof. Using \eqref{eqn:mu_mud} and \eqref{eqn:v_expansion}, the condition $\prescript{}{\mathcal{V}^{*}}{\bigl<}  {\mu}_{i,m},v  {\bigr>}_{\mathcal{V}}=0$ can be written as,
\begin{align}
\label{eqn:mu_v_1} \prescript{}{\mathcal{V}^{*}}{\Bigl<} \myds{\mu}_{i}(x) \Phi_m(\xi),\sum_{\abs{n}=0}^{\infty} \widehat{v}_n(x) \Phi_n(\xi) {\Bigr>}_{\mathcal{V}} &=0,\\
\Rightarrow \qquad \sum_{\abs{n}=0}^{\infty} \mathrm{E} \bigl[  \Phi_m(\xi) \Phi_n(\xi) \bigr]
\prescript{}{ {\myds{\mathcal{V}}}^{*} }{\bigl<}  \myds{\mu}_{i}(x)  , 
\widehat{v}_n(x)  {\bigr>}_{\myds{\mathcal{V}}}& = 0 , \\
\label{eqn:mu_v_2} \Rightarrow \qquad \prescript{}{{\myds{\mathcal{V}}}^{*}}{\Bigl<}   \myds{\mu}_{i}(x)   , \widehat{v}_m(x)  {\Bigr>}_{\myds{\mathcal{V}}} & = 0.
\end{align}
From \eqref{eqn:mud_def} and  \eqref{eqn:mu_v_2} we conclude that $\myds{\mathcal{P}}\widehat{v}_m(x)=0$, which in conjunction with \pref{prop:projector} yields $\mathcal{P}v = 0$.
\qed
\end{proof}


\begin{result} \label{result:delta_mu}
Let any deterministic functional $\myds{\mu}_{i} $ be such that,
\begin{equation} {\mu}^{d}_{i}(x) = \delta (x -x_i), \end{equation}
where $x_i \in \mathcal{D}$.  The fine-scale Green's operator $\myprime{\mathcal{G}}$ then satisfies the property,
\begin{equation} \mathrm{E} \left[ \Phi_m(\xi)  \myprime{\mathcal{G}} (\chi)|_{x_i} \right] = 0, \qquad \forall \chi \in \mathcal{V}^{*}. \end{equation}
\end{result}

\begin{proof}

 From \eqref{eqn:fine_greens_finite_property_2} we know that the fine-scale Green's operator $\myprime{\mathcal{G}}$ is such that, 
\begin{equation}
\prescript{}{\mathcal{V}^{*}}{\bigl<}   \mu_{i,m},  \myprime{\mathcal{G}} \chi {\bigr>}_{\mathcal{V}} = 0, \quad \forall \chi \in \mathcal{V}^{*}, \quad \forall \begin{array}{c@{=}l} i & 1, \cdots , \myds{N} \\  \abs{m} & 0,\cdots,p \end{array}.
\end{equation}
Using \pref{prop:mu}, ${\mu}_{i,m}(x,\xi) = \delta (x -x_i) \Phi_m(\xi)$ and thus we have,
\begin{align}
	\prescript{}{\mathcal{V}^{*}}{\bigl<} \delta (x -x_i) \Phi_m(\xi), \myprime{\mathcal{G}} (\chi) {\bigr>}_{\mathcal{V}}
	& =   0, \\
\Rightarrow \mathrm{E} \left[ \Phi_m(\xi)  \myprime{\mathcal{G}} (\chi)|_{x_i} \right]	& = 0.
\end{align}
\qed 
\end{proof}

\subsubsection{A Remark about the locality of the fine-scale stochastic Green's operator}
In the case of linear deterministic problems posed on one spatial dimension, the VMS method using an $H^1$ projector yields nodally exact solutions \cite{Hu95,HuEtal98}. As a result the functionals $\myds{\mu}_i = \delta (x -x_i)$, where $x_i$ are the coordinates of the physical nodes, completely characterize the fine-scale solution. Result \ref{result:delta_mu} describes an attribute of the appropriately adapted VMS method for stochastic problems posed on one spatial dimension. The fine-scale solution is such that the expectation of its product taken against each member of the basis for the coarse space of random variables, vanishes at the physical nodes. Equivalently, if the fine-scale solution is expanded in terms of a gPC expansion, then the coefficients of this expansion corresponding to the coarse gPC modes will be zero.

Let us continue to assume that $\myds{\mu}_i = \delta (x -x_i)$, where $x_i$ are the coordinates of the physical nodes. Under these conditions as one refines in the stochastic space, the expectation of the product of the fine scales with an ever increasing set of the coarse space basis functions must vanish at the nodes. In the limit $p \to \infty$, this implies that the fine scales vanish at the nodes. Consequently, the fine-scale stochastic Green's operator tends to a Green's operator that is zero on the physical nodes. This operator is called the element Green's operator. A tremendous practical benefit of this observation is the fact that the element Green's operator is local in that its support is restricted to within a single element, and hence including it in the VMS formulation does not change the locality of the finite element method. Result 3.1 suggests that for the stochastic problem, using the element Green's operator instead of the fine-scale Green's operator might be a reasonable approximation. It is certainly a valid approximation in the limit of a very well refined stochastic space, {\it i.e.}, large enough $p$. This property is demonstrated in \sref{subsec:ade_fsgf_localization} through a concrete numerical example.

\subsection{The Green's function for stochastic problems} \label{subsec:sgf}
We begin this section by relating the stochastic Green's function to its deterministic counterpart. Thereafter, by making use of this relation and the propositions proved in the previous section, we derive an explicit expression for the fine-scale stochastic Green's function in terms of the deterministic Green's function and the linear functionals $\myds{\mu}_i$. Since these quantities are required in constructing the fine-scale deterministic Green's function, a natural conclusion is that one can construct the fine-scale stochastic Green's function whenever the corresponding fine-scale deterministic Green's function can be constructed.

The solution $u$ to \eqref{eqn:strong_form} can be formally expressed as $u=\mathcal{G}(f)$,  where $\mathcal{G}: \mathcal{V}^{*} \rightarrow \mathcal{V}$ is the Green's operator, {\it i.e.}, $\mathcal{G} = \mathcal{L}^{-1}$. The action of $\mathcal{G}$ on $f \in  \mathcal{V}^{*}$ is conveniently represented through the convolution of the Green's function $g: \mathcal{D} \times \mathcal{D} \times \Omega \times \Omega \rightarrow \mathbb{R}$ with $f$,
\begin{equation} \label{eqn:stochastic_greens_function}
	 u\bigl(x,\xi\bigr) = \int_{\tilde{x} \in \mathcal{D}} \int_{ \tilde{\xi} \in \Upsilon } g(x,\tilde{x} ; \xi, \tilde{\xi})   f(\tilde{x},\tilde{\xi}) d\tilde{x} d\tilde{\xi}.
\end{equation}
We label $g$ as the stochastic Green's function, and express it in terms of Green's function for deterministic problems. In this context, we interpret the deterministic Green's function as parametrized by the coefficients of the differential operator $\mathcal{L}$ in \eqref{eqn:strong_form} for various realizations of the random vector $\xi$, and use $\myds{g}_{\xi}$ to denote the same. We develop this concept in the ensuing discussion, and provide a definition for $g$.

Let $\beta(x,\xi)$ be a stochastic function, representing a typical coefficient of the differential operator $\mathcal{L}$ in \eqref{eqn:strong_form}, {\it e.g.}, see \sref{subsubsec:nRV_results} for an instance of  $\beta(x,\xi)$ in the case of advection-diffusion equation. For every possible realization $\tilde{\xi}$ of the random vector $\xi$, $\beta(x,\tilde{\xi})$, which is a function of the physical variable only,  has an associated deterministic differential operator $\myds{\mathcal{L}}_{\tilde{\xi}}$, and a dual term $\myds{f}_{\tilde{\xi}}(x) \mydef f(x,\tilde{\xi})$. We interpret $\myds{\mathcal{L}}_{\tilde{\xi}}$ as the instance of constructing $\mathcal{L}$ in \eqref{eqn:strong_form} with coefficients $\beta(x,\tilde{\xi})$. Together, $\myds{\mathcal{L}}_{\tilde{\xi}}$ and $\myds{f}_{\tilde{\xi}}(x)$ define the problem,
\begin{equation} \label{eqn:realization_strong_form}
	\myds{\mathcal{L}}_{\tilde{\xi}} \myds{u}_{\tilde{\xi}}(x) = \myds{f}_{\tilde{\xi}}(x).
\end{equation}
The solution to \eqref{eqn:realization_strong_form} can be obtained by using the Green's function  $\myds{g}_{\tilde{\xi}}: \mathcal{D} \times \mathcal{D} \rightarrow \mathbb{R}$, {\it i.e},
\begin{equation} \label{eqn:deterministic_greens_function}
	\myds{u}_{\tilde{\xi}}(x) = \int_{\tilde{x} \in \mathcal{D}} \myds{g}_{\tilde{\xi}}(x,\tilde{x})\myds{f}_{\tilde{\xi}}(\tilde{x}) d\tilde{x}.
\end{equation}
Moreover, $\myds{u}_{\tilde{\xi}}(x)$ is equivalent to evaluating the solution $u(x,\xi)$ to \eqref{eqn:strong_form} at the realization $\tilde{\xi}$. Thus an alternative way to write \eqref{eqn:deterministic_greens_function} having incorporated this property is
\begin{equation}
 u\bigl( x, \tilde{\xi} \bigr) = \int_{\tilde{x} \in \mathcal{D}} \myds{g}_{\tilde{\xi}}(x,\tilde{x}) f(\tilde{x},\tilde{\xi}) d\tilde{x}.
\end{equation}
The solution to \eqref{eqn:strong_form} can thus be written as
\begin{equation} \label{eqn:stochastic_greens_function_2}
u\bigl( x,\xi \bigr) = \int_{\tilde{x} \in \mathcal{D}} \int_{\tilde{\xi} \in \Upsilon } \myds{g}_{\xi}(x,\tilde{x}) \delta ( \xi - \tilde{\xi} ) f(\tilde{x},\tilde{\xi}) d\tilde{x} d\tilde{\xi}.
\end{equation}
The term inside the integral in \eqref{eqn:stochastic_greens_function_2} which convolves with $f(\tilde{x},\tilde{\xi})$ is defined as the stochastic Green's function, {\it i.e.},
\begin{equation} \label{eqn:g_gd}
 	g(x,\tilde{x} ; \xi , \tilde{\xi})  \mydef  \myds{g}_{\xi} (x,\tilde{x}) \delta ( \xi - \tilde{\xi} ).
\end{equation}

\subsubsection{The fine-scale stochastic Green's function} \label{subsubsec:fssgf}

The fine-scale stochastic Green's function $\myprime{g} : \mathcal{D} \times \mathcal{D} \times \Omega \times \Omega \rightarrow \mathbb{R}$, which represents the fine-scale Green's operator $\myprime{\mathcal{G}}$, yields the fine-scale component $\myprime{u}$ from the coarse-scale residual. In the finite dimensional case, we can express the action of  $\myprime{g}$ on any arbitrary $\chi \in \mathcal{V}^{*}$, in terms of the deterministic Green's function  $ \myds{g}_{\xi} $, and the $\mu$'s relevant to the projection. Specifically, using \eqref{eqn:fine_greens_finite},
\begin{equation} \label{eqn:G_operator_Prime_chi}
	\myprime{\mathcal{G}}(\chi) = \mathcal{G}(\chi) - \mathcal{G}\bm{\mu}^{T}[\bm{\mu}\mathcal{G}\bm{\mu}^{T}]^{-1}\bm{\mu}\mathcal{G}(\chi).
\end{equation}
We can evaluate the individual terms in \eqref{eqn:G_operator_Prime_chi} using  \eqref{eqn:stochastic_greens_function}, \eqref{eqn:g_gd} and \pref{prop:mu} to obtain the desired expression, {\it i.e.},
\begin{align}
 \label{eqn:fssgf_term1}
	\mathcal{G}(\chi) &=  \int_{\tilde{x} \in \mathcal{D}} \myds{g}_{\xi}(x,\tilde{x})  \chi(\tilde{x},\xi) d\tilde{x},\\
 \label{eqn:fssgf_term2}
	[\mathcal{G} \bm{\mu}^{T}]_{i,m} &= \Phi_m(\xi) \int_{\tilde{x} \in \mathcal{D}} \myds{g}_{\xi}(x,\tilde{x})  \mu_{i}^{d}(\tilde{x}) d\tilde{x},\\
 \label{eqn:fssgf_term3}
	[\bm{\mu}\mathcal{G}\bm{\mu}^{T}]_{i,m,j,n} &= \int_{x \in \mathcal{D}} \int_{\tilde{x} \in \mathcal{D}}   \myds{\mu}_{i}(x) \mathrm{E}\left[ \Phi_m(\xi) \myds{g}_{\xi}(x,\tilde{x}) \Phi_n(\xi)\right] \myds{\mu}_{j}(\tilde{x}) d\tilde{x} dx, \\
 \label{eqn:fssgf_term4}
	[\bm{\mu} \mathcal{G} (\chi) ]_{j,n}  &= \int_{x \in \mathcal{D}}  \int_{\tilde{x} \in \mathcal{D}}  \myds{\mu}_{j}(x) \mathrm{E}\left[ \Phi_n(\xi) \myds{g}_{\xi}(x,\tilde{x}) \chi(\tilde{x},\xi)\right] d\tilde{x} dx.
\end{align} 
\eqref{eqn:G_operator_Prime_chi}$-$\eqref{eqn:fssgf_term4} completely defines the explicit formula for the fine-scale stochastic Green's function. It is the key building block for constructing the VMS formulation for a stochastic PDE. Remarkably, the formula above is expressed entirely in terms of deterministic quantities. In particular the deterministic Green's function $\myds{g}_{\xi}$, and the deterministic functionals $ \mu_{i}^{d}$ that characterize the physical fine-scale space. Therefore we conclude that when it is possible to construct the VMS formulation for a deterministic PDE, then it is also possible to construct one for its stochastic counterpart where the parameters that appear in the differential operator are random variables.

\begin{remark} 
It was pointed out previously that $\myds{\mu}_i = \delta (x -x_i)$, where $x_i$ are coordinates of the physical nodes, are appropriate for deterministic problems posed on one spatial dimension when the VMS method has been defined using an $H^1$ projector. In the case of stochastic problems, such a choice for $\myds{\mu}_i $ will simplify the terms in \eqref{eqn:G_operator_Prime_chi} significantly. In particular,
\begin{align}
	\label{eqn:fssgf_1Dcase_term1}[\mathcal{G} \bm{\mu}^{T}]_{i,m} &= \myds{g}_{\xi}(x,x_i)  \Phi_m(\xi), \\
	\label{eqn:fssgf_1Dcase_term2}[\bm{\mu}\mathcal{G}\bm{\mu}^{T}]_{i,m,j,n} &= \mathrm{E}\bigl[ \Phi_m(\xi) \myds{g}_{\xi}(x_i,x_j) \Phi_n(\xi) \bigr], \\
	\label{eqn:fssgf_1Dcase_term3}[\bm{\mu} \mathcal{G} (\chi) ]_{j,n}  &= \int_{\tilde{x} \in \mathcal{D}}  \mathrm{E}\big[ \Phi_n(\xi) \myds{g}_{\xi}(x_j,\tilde{x}) \chi(\tilde{x},\xi)\big] d\tilde{x}.
\end{align} 
In the following section we use \eqref{eqn:G_operator_Prime_chi}, \eqref{eqn:fssgf_term1}, and \eqref{eqn:fssgf_1Dcase_term1}$-$\eqref{eqn:fssgf_1Dcase_term3}, to determine the fine-scale stochastic Green's function for the advection-diffusion equation in one spatial dimension.

\end{remark}

\section{The stochastic advection diffusion model problem} \label{sec:ade_eg}

In this section we apply the VMS formulation to the stochastic advection-diffusion problem in one spatial dimension. We select $\mathcal{P}$ to be projector defined by the $\mathcal{V}$-inner-product as in \eqref{eqn:projector_def}. We first describe the problem statement and the VMS formulation in \sref{subsec:ade_problem}.  Thereafter, in \sref{subsec:ade_fsgf_localization} we explicitly evaluate the fine-scale stochastic Green's function for this problem and examine its locality in the physical space. We conclude that while the fine-scale stochastic Green's function is non-local, its magnitude outside the element domain under consideration is very small. Motivated by this we approximate the fine-scale stochastic Green's function in each element by the element Green's function, which is identically zero outside the element. We evaluate the performance of the VMS formulation derived from this approximation on problems with one and five random variables in \sref{subsec:numerical_results}.

\subsection{The problem statement} \label{subsec:ade_problem}

As an instance of \eqref{eqn:strong_form}, we consider the advection diffusion problem posed on one physical dimension. We seek the stochastic function $u(x,\xi):\bar{\mathcal{D}} \times \Omega \rightarrow \mathbb{R}$ such that,
\begin{equation}  \label{eqn:ade_strong_form}
     \left. \begin{aligned}
        -\kappa \Delta u(x,\xi) + \beta(x,\xi) \cdot \nabla u(x,\xi) &= f(x,\xi) \qquad x \in \mathcal{D}, \\
         u(x,\xi) &= 0 \qquad \qquad x \in \partial \mathcal{D}.
      \end{aligned}  \right\} 
\end{equation}
where $\bar{\mathcal{D}}=[0 , L]$ is a finite one dimensional domain, $\kappa>0$ is the scalar diffusivity. $f(x,\xi):  \mathcal{D} \times \Omega \rightarrow \mathbb{R} $ is the source term, and $\beta(x,\xi): \mathcal{D} \times \Omega \rightarrow \mathbb{R} $ is the  advection field, both known beforehand. The variational formulation, and the VMS formulation for \eqref{eqn:ade_strong_form} can be constructed using the procedure outlined in \sref{subsec:variational_formulation} and \sref{sec:vms_formulation}. The VMS formulation written using the fine-scale Green's operator $\myprime{\mathcal{G}}$ is: find $\bar{u}(x,\xi) \in \bar{\mathcal{V}}$ such that,
\begin{align}  \label{eqn:ade_weak_form}
\nonumber \mathrm{E}\Bigl[ \int_{x \in \mathcal{D}} & \kappa\nabla\bar{u}  \cdot \nabla\bar{w} dx +  \int_{x \in \mathcal{D}} \beta  \cdot \nabla\bar{u}\bar{w} dx \Bigr] - \mathrm{E}\Bigl[ \int_{x \in \mathcal{D}} \mathcal{L} \myprime{\mathcal{G}} \left( \mathcal{L} \bar{u} \right) \bar{w} dx \Bigr] \\
& = \mathrm{E}\Bigl[  \int_{x \in \mathcal{D}} f  \bar{w} dx  \Bigr] - \mathrm{E}\Bigl[ \int_{x \in \mathcal{D}} \mathcal{L}\myprime{\mathcal{G}}\left(f\right) \bar{w} dx \Bigr], \qquad \forall \bar{w} \in \bar{\mathcal{V}},
\end{align}
where $\mathcal{L} =  -\kappa \Delta + \beta(x,\xi) \cdot \nabla $ is the stochastic advection diffusion differential operator. The Green's function for its deterministic counterpart is well known, \cite{Hu95,HuEtal98}. Hence the fine-scale stochastic Green's function $\myprime{g}$, which defines the fine-scale stochastic Green's operator $\myprime{\mathcal{G}}$, can be constructed using the method detailed in \sref{subsubsec:fssgf}. This is done in the following development.

\subsection{Linear finite elements and localization of the fine-scale space} \label{subsec:ade_fsgf_localization}
\begin{figure}[ht]
  \centering   
  \subcaptionbox{$\mathcal{G}(\chi)$  \label{fig:FSGF_SurfPlot_GoV} }[0.48\linewidth]{\includegraphics[width=.4\linewidth,angle=90]{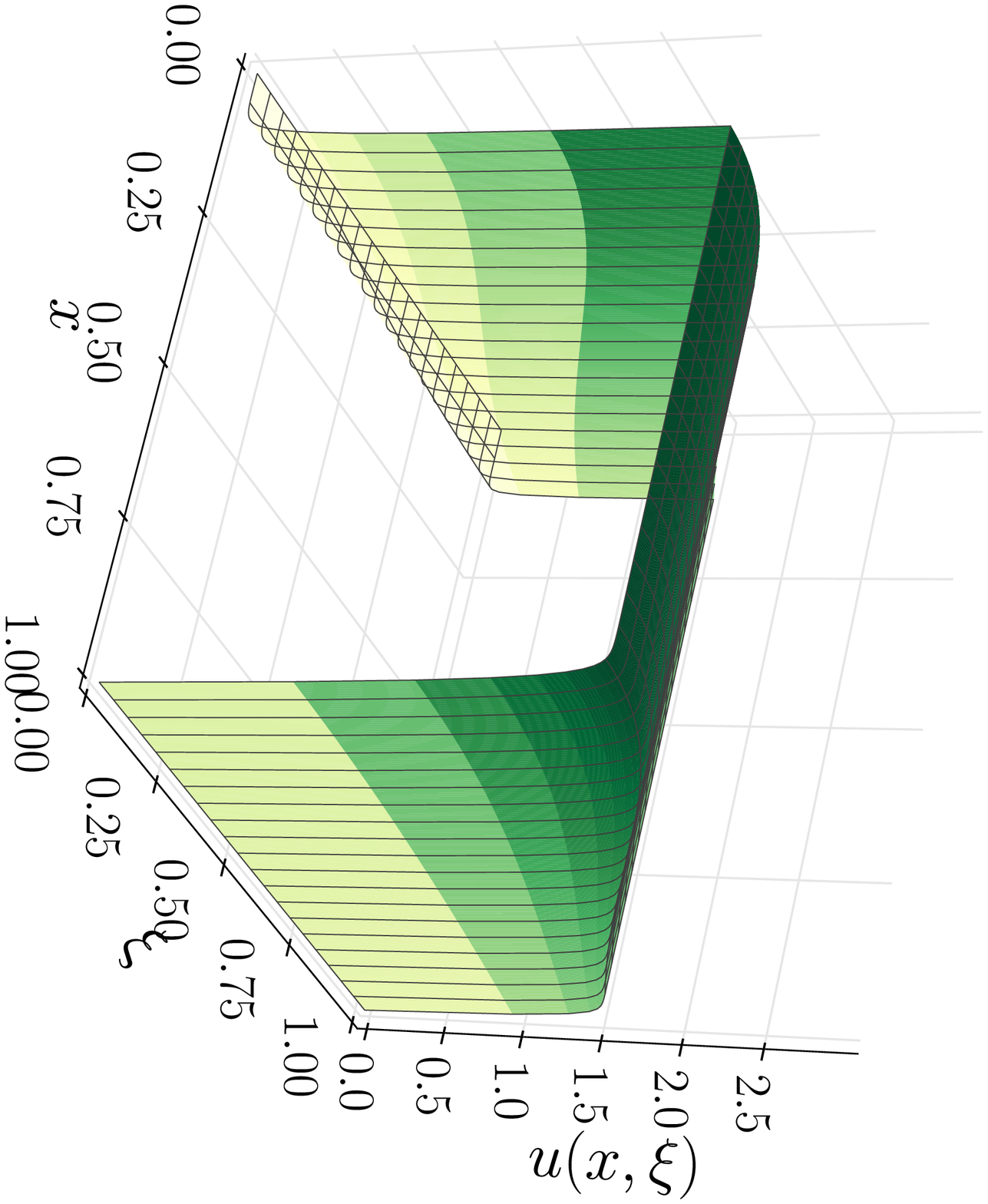}}
  \subcaptionbox{$\myprime{\mathcal{G}}(\chi)$  \label{fig:FSGF_SurfPlot_GFoV} }[0.48\linewidth]{ \includegraphics[width=.4\linewidth,angle=90]{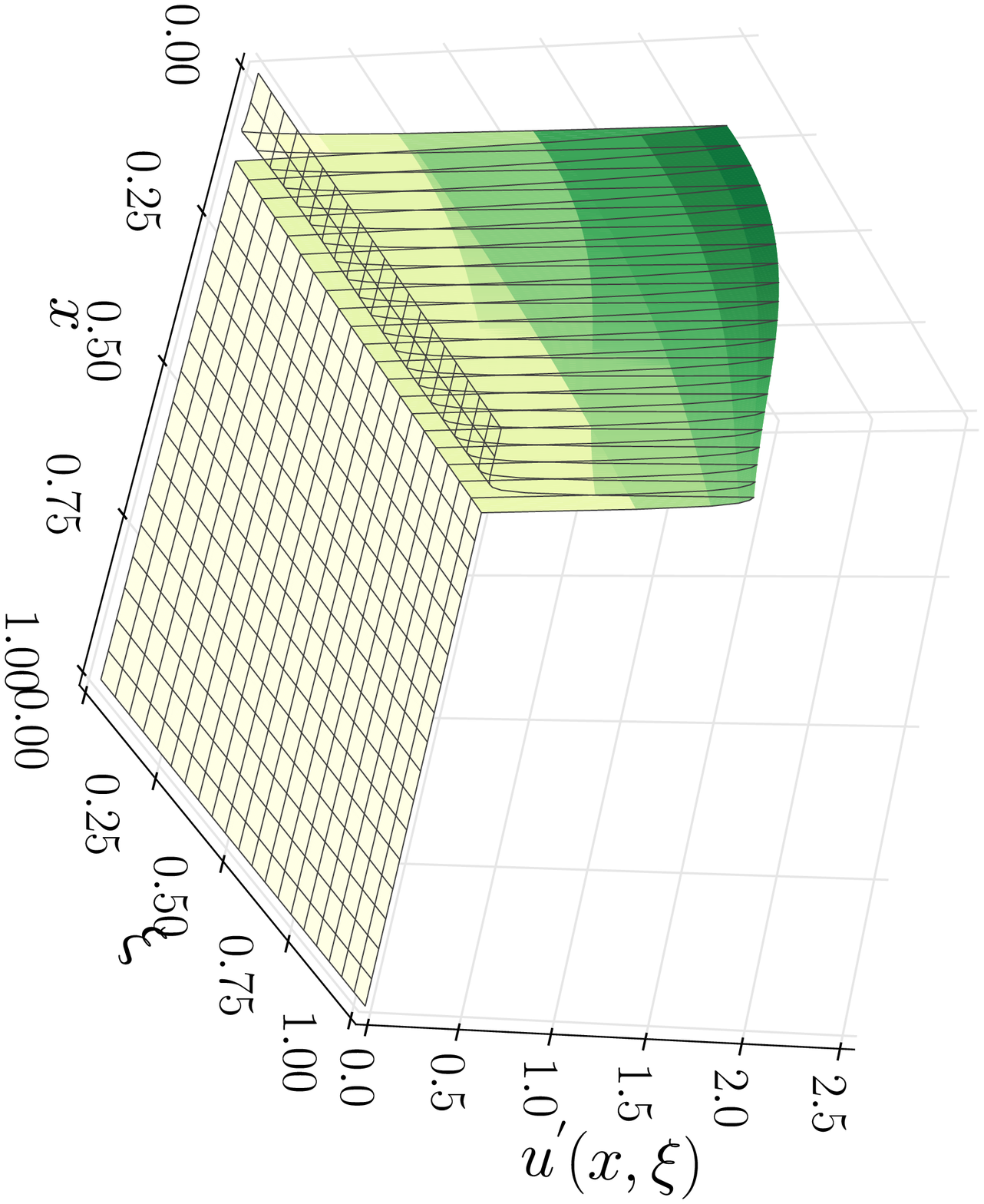} }
 \caption{Comparison of the action of stochastic Green's function and fine-scale stochastic Green's function on a stochastic function $\chi$ specified in \sref{subsec:ade_fsgf_localization}, with $L=1$,  $\kappa=10^{-3}$, a uniform grid of $n_{el}=20$ elements, and a maximum order of gPC $p=2$.} \label{fig:sgf_vs_sfsgf}
\end{figure}
\begin{figure}[ht] 	
  	\centering \includegraphics[width=0.75\textwidth,angle=0]{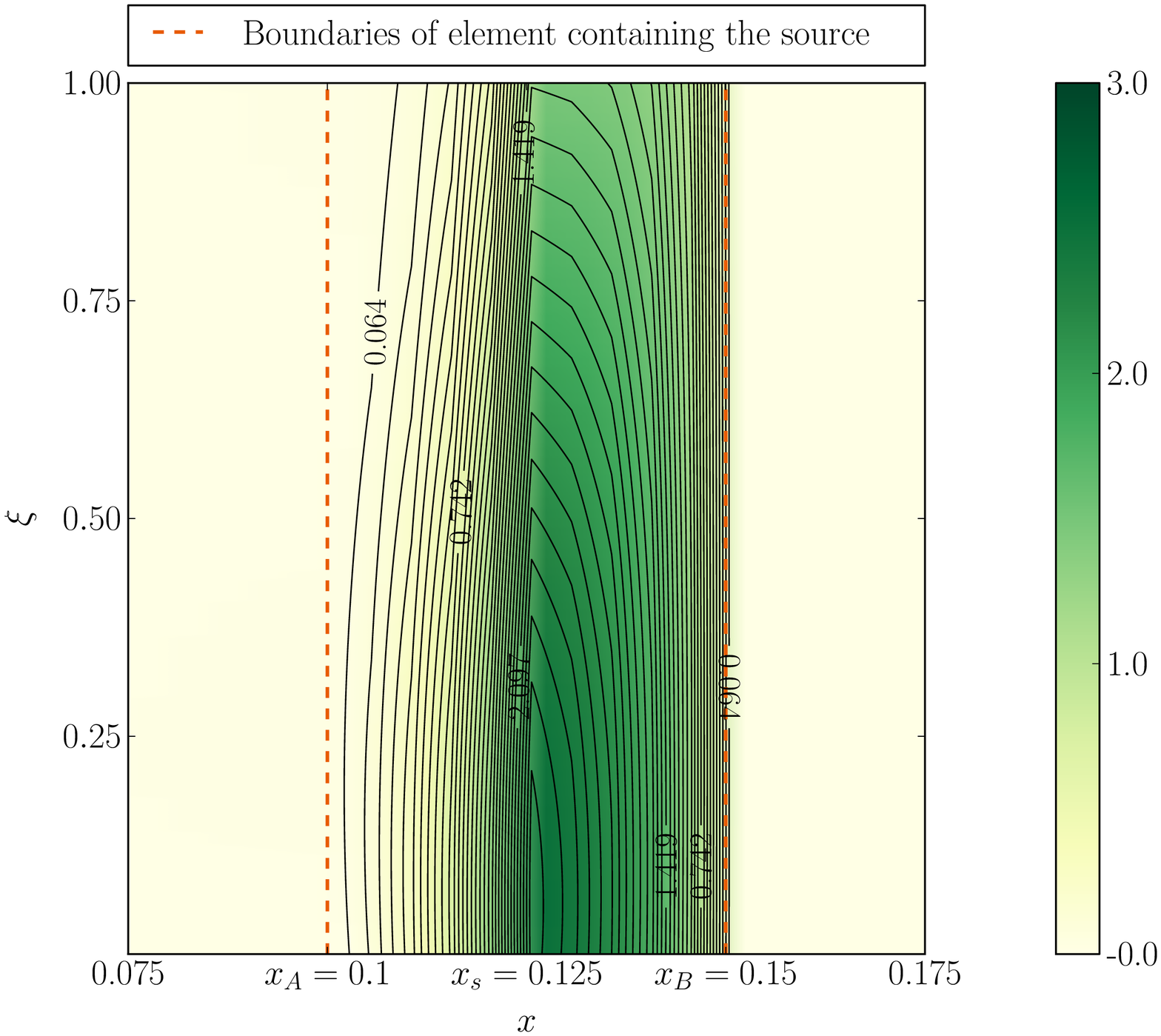}  	                                                          
  	\caption{The fine-scale function $\myprime{\mathcal{G}}(\chi)$ close to the source $x_s$, where $\chi$ and $x_s$ are defined in \sref{subsec:ade_fsgf_localization}.} \label{fig:FSGF_contourPlot_GFoV}
\end{figure}
\begin{figure}[ht] 	
  	\centering \includegraphics[width=0.75\textwidth,angle=90]{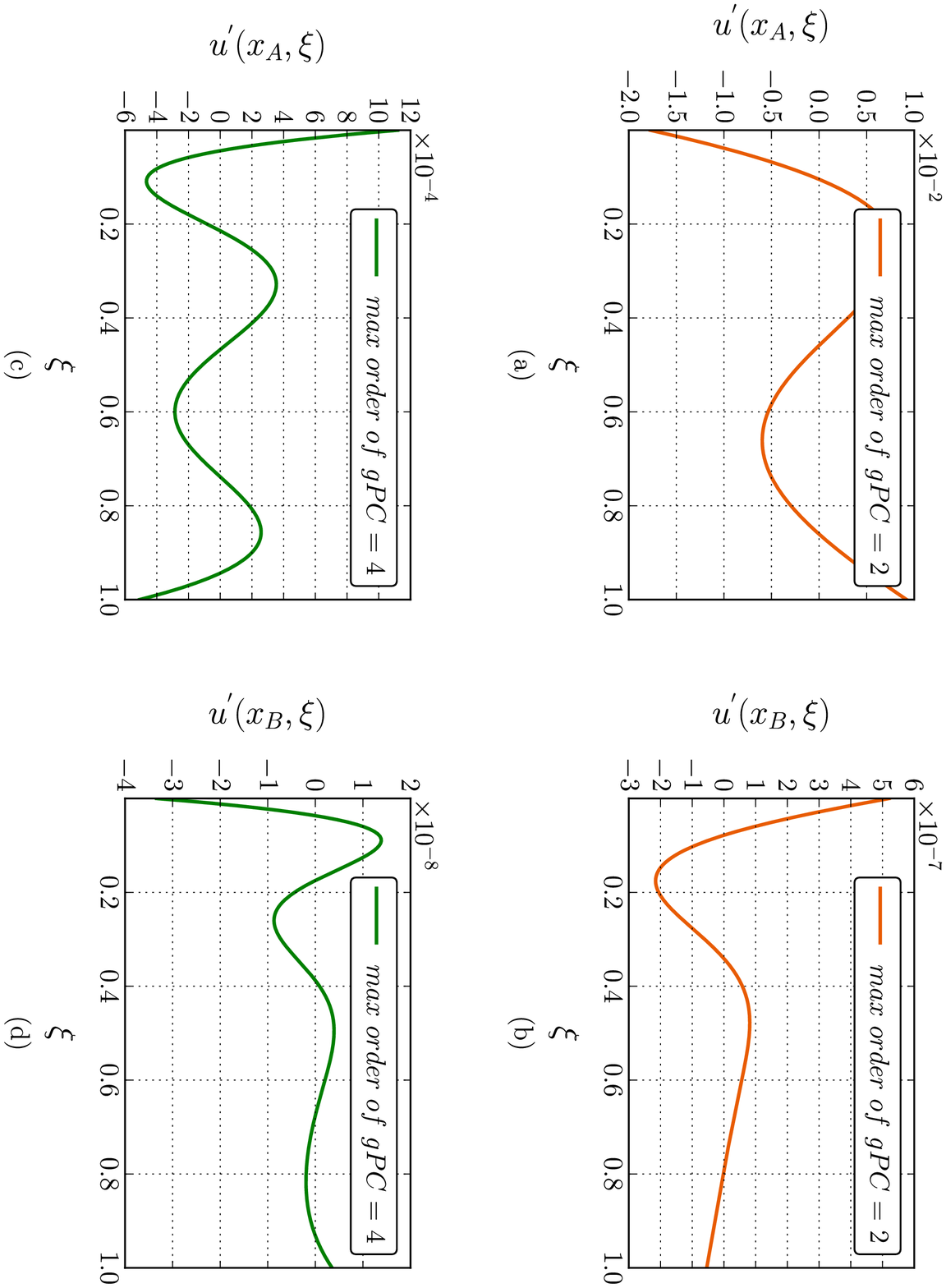}  	                                                          
  	\caption{The fine-scale function at the element boundaries $x_A$ and $x_B$, with the source $\chi$ present within the element. $x_A$, $x_B$ and $\chi$ are defined in \sref{subsec:ade_fsgf_localization}.} \label{fig:FSGF_LinePlot_GFoV_At_EB}
\end{figure}
We consider a physical domain discretized using the grid of nodes,  $0 = x_0 < x_1 < \cdots < x_{n_{el} - 1} < x_{n_{el}} = L$, where $n_{el}$ is the number of elements. An element spans the interval $(x_{i-1}, x_{i})$, where $i=1,\cdots,n_{el}$. We define a linear piecewise polynomial space for the physical discretization, and use gPC of total order less than, or equal to, $p$, to discretize the stochastic dimension. Using this setup, our objective is to examine the locality of the fine-scale Green's function. The stochastic coefficient $\beta(x,\xi)$ is assumed constant in physical space, and a function of a single uniformly distributed random variable. The explicit dependence is chosen to be,
\begin{equation}  \label{eqn:af_def}
\beta(x,\xi) = 1 + \xi^2,  \qquad \xi \sim \mathcal{U}(0,1).
\end{equation}

In  \fref{fig:FSGF_SurfPlot_GoV} and \ref{fig:FSGF_SurfPlot_GFoV} we plot the result of the Green's function and the fine-scale Green's function acting on a source $\chi = \delta(x-x_s)( \phi_0(\xi) + \phi_1(\xi) + \phi_2(\xi))$, where $x_s = 0.125$ is the physical location of the source, and $\phi_i(\xi_i), i =0,1,2$, are the first three Legendre polynomials defined on $[0,1]$. From \fref{fig:FSGF_SurfPlot_GoV} we note that even though $\chi$ is local in physical space, the function $\mathcal{G}(\chi)$ is not. This points to the non-local nature of the Green's operator in the physical space. On the other hand from \fref{fig:FSGF_SurfPlot_GFoV} we observe that the function $\myprime{\mathcal{G}}(\chi)$ appears to be local, in that its value beyond the element which bounds the extent of $\chi$ is very small. This indicates that the fine-scale Green's operator is local in the physical space. It is important to note that in our experience the specific choice of the source $\chi$, advection field $\beta$, and an arbitrary non-uniform mesh, do not alter the local behavior of the fine-scale stochastic Green's function discussed above.

In \fref{fig:FSGF_SurfPlot_GFoV}, the precise spread of the fine-scale function isn't clearly discernible. However, from \fref{fig:FSGF_contourPlot_GFoV}, we observe that even though the resulting fine-scale function is small outside the element, it is non-zero at the boundary of the element containing the source $\chi$, and thus not strictly local in the physical space. In \fref{fig:FSGF_LinePlot_GFoV_At_EB}a and \ref{fig:FSGF_LinePlot_GFoV_At_EB}b we plot the fine-scale function at the element boundaries $x=x_A$, and $x=x_B$, against the range of the random variable. As seen from those plots, the variation of the fine-scale function is higher than quadratic when the underlying gPC have a total order of $p=2$; this is a direct manifestation of Result \ref{result:delta_mu}. If we increase the order to $p=4$, and again plot at the element boundaries $x=x_A$, and $x=x_B$, the fine-scale function resulting with the same source $\chi$,  we observe through \fref{fig:FSGF_LinePlot_GFoV_At_EB}c and \ref{fig:FSGF_LinePlot_GFoV_At_EB}d that the variation is quintic or higher. Additionally, the magnitude of the fine-scale function is significantly lower. This substantiates the earlier claim that in the limit $p \rightarrow \infty$, the fine scales vanish at the nodes, and the fine-scale Green's function converges to the element Green's function.

From the viewpoint of a practical numerical method, the approximation of the fine-scale Green's function by the element Green's function prevents any coupling between the elements in the physical dimension. This ensures that the terms accounting for the effect of fine scales in the coarse-scale VMS formulation, can be computed through the standard finite element assembly procedure and do not alter the stencil of the problem. 

The VMS term that appears in \eqref{eqn:ade_weak_form} simplifies considerably when 
\begin{inparaenum}[(i)] 
\item the fine-scale Green's function in each element is replaced by the element Green's function, that is $\myprime{g}(x,\tilde{x}; \xi,\tilde{\xi}) \approx g^{d,el}_{\xi}(x,\tilde{x}) \delta(\xi - \tilde{\xi})$, where $g^{d,el}_{\xi}$ is the deterministic element Green's function,  
\item it is recognized that for linear finite elements the diffusive term does not contribute within element interiors, and 
\item it is assumed that $\kappa$, $\beta$ and source term $f$ are piecewise constant across the physical domain. 
\end{inparaenum} 
Accounting for these, the VMS term in \eqref{eqn:ade_weak_form}  is simplified as follows,
\begin{align}  \label{eqn:tau_derivation}
        \mathrm{E}\Bigl[ \int_{x \in \mathcal{D}} \mathcal{L} \myprime{\mathcal{G}} \left( r \right) \bar{w} dx \Bigr] 
        & =  \mathrm{E}\Bigl[ \int_{x \in \mathcal{D}} \int_{\tilde{x} \in \mathcal{D}} \int_{ \tilde{\xi} \in \Upsilon } \myprime{g}(x,\tilde{x}; \xi,\tilde{\xi})  r\bigl(\tilde{x},\tilde{\xi} \bigr) \mathcal{L}^{*} \bar{w}(x,\xi)  d\tilde{x} d\tilde{\xi} dx\Bigr], \\
        & \approx \mathrm{E}\Bigl[  \sum_{i=1}^{n_{el}} \int_{x_{i-1}}^{x_{i}} \int_{x_{i-1}}^{x_{i}} \int_{ \tilde{\xi} \in \Upsilon } g^{d,el}_{\xi}(x,\tilde{x}) \delta(\xi - \tilde{\xi})  r\bigl(\tilde{x},\tilde{\xi} \bigr) \mathcal{L}^{*} \bar{w}(x,\xi)  d\tilde{x} d\tilde{\xi} dx\Bigr], \\
       \label{eqn:vms_contrib} & = \mathrm{E}\Bigl[  \sum_{i=1}^{n_{el}} \int_{x_{i-1}}^{x_{i}} \int_{x_{i-1}}^{x_{i}}  g^{d,el}_{\xi}(x,\tilde{x})   r\bigl(\tilde{x},\xi \bigr) \mathcal{L}^{*} \bar{w}(x,\xi)  d\tilde{x}  dx\Bigr], \\
        = &   \sum_{i=1}^{n_{el}}  \mathrm{E} \Bigl[  \biggl(  \frac{\int_{x_{i-1}}^{x_{i}} \int_{x_{i-1}}^{x_{i}} g^{d,el}_{\xi}(x,\tilde{x}) dx d\tilde{x}}{h_i} \biggr) h_i r\bigl(\tilde{x},\xi \bigr) \mathcal{L}^{*} \bar{w}(x,\xi) \Bigr], \\
         \label{eqn:tau_g_relation} & =   \sum_{i=1}^{n_{el}}  \mathrm{E} \Bigl[   \tau_i(\xi) h_i r\bigl(\xi \bigr) \mathcal{L}^{*} \bar{w}(\xi)  \Bigr],        
\end{align}
where $r = f - \mathcal{L}\bar{u}$ is the coarse-scale residual, $ \mathcal{L}^{*} =   -\kappa \Delta - \beta(x,\xi) \cdot \nabla $  is the adjoint of $\mathcal{L}$, $ h_i =  x_{i} - x_{i-1} $ is the size of the $i^{th}$ element, and the term 
$\tau_i(\xi)$  is recognized as the stabilization parameter $\tau$ \cite{Hu95}. In this context however, it is necessary to note that $\tau$ is a stochastic function, which is piecewise constant across the physical domain. In particular,
\begin{align}  \label{eqn:tau_def}
        \tau_{i} (\xi)  =  \frac{\int_{x_{i-1}}^{x_{i}} \int_{x_{i-1}}^{x_{i}} g^{d,el}_{\xi}(x,\tilde{x}) d\tilde{x} dx}{h_i} = \frac{h_i}{2\beta}\left( \coth(Pe(\xi)) - \frac{1}{Pe(\xi)}\right),
\end{align}
where $Pe(\xi) = h\beta(\xi)/2\kappa$ is the mesh Pecl\'{e}t number.

\paragraph*{Remarks}
\begin{enumerate}
\item We note that the final expression \eqref{eqn:tau_g_relation} is valid only when it is assumed that the contribution from the residual and the weighting function is constant (but random) over an element. In the case of advection-diffusion equation this is true only for linear finite elements with piecewise-constant problem parameters. 
\item On the other hand, expression \eqref{eqn:vms_contrib} is valid for any variation of the residual and weighting functions. In that sense, \eqref{eqn:vms_contrib} is a more general expression. In cases where the variation of the residual and the contribution from the weighting functions is known to be of a certain polynomial order, the double integral in the physical space can be simplified by expanding $\tau$ as a polynomial function in $x$ and $\tilde{x}$ (see \cite{Oberai98} for example), and evaluating the integral analytically. 
\item The VMS solution is guaranteed to yield coarse-scale solutions that are optimal in the sense defined by the projector $\mathcal{P}$, for instance as defined in \eqref{eqn:projector_def}, if the exact expression for the fine-scale Green's function is used. With our choice of the projector in one dimension this translates to the property that the coarse-scale solution will have exact gPC coefficients at the physical nodal coordinates (\sref{prop:projector}). When we approximate the fine-scale Green's function with the element Green's function, this is no longer guaranteed. However, since we know this to be a reasonable approximation, we might expect the gPC coefficients at the nodes to be quite close to the exact values. This hypothesis is numerically verified in the following section for problems posed in a single and in five stochastic dimensions.
\end{enumerate}
\subsection{Numerical results} \label{subsec:numerical_results}
\subsubsection{A single random variable example} \label{subsubsec:1RV_results}

\begin{figure}[ht]
	\centering
	\subcaptionbox{Analytical solution \label{fig:1RV_surfPlot_ExactSoln}}[0.49\linewidth]{\includegraphics[width=0.4\textwidth,angle=90]{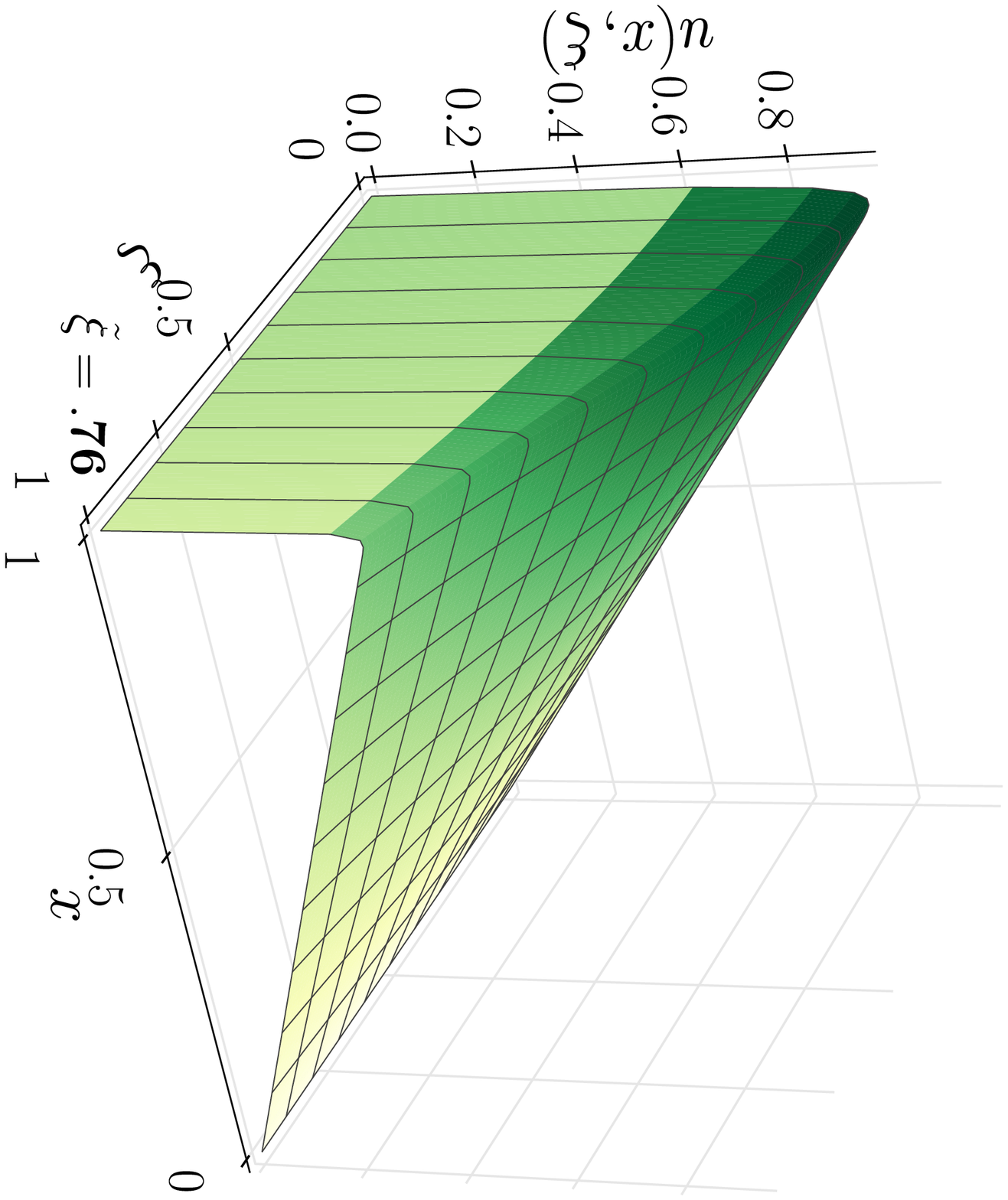}}
	\subcaptionbox{Realization of the Galerkin and VMS solutions using a uniform grid of $n_{el}=20$ elements, and a maximum order of gPC $p=2$.\label{fig:1RV_LinePlot_GalerkinVMS_realization}}[.49\linewidth]{\includegraphics[width=0.4\textwidth,angle=90]{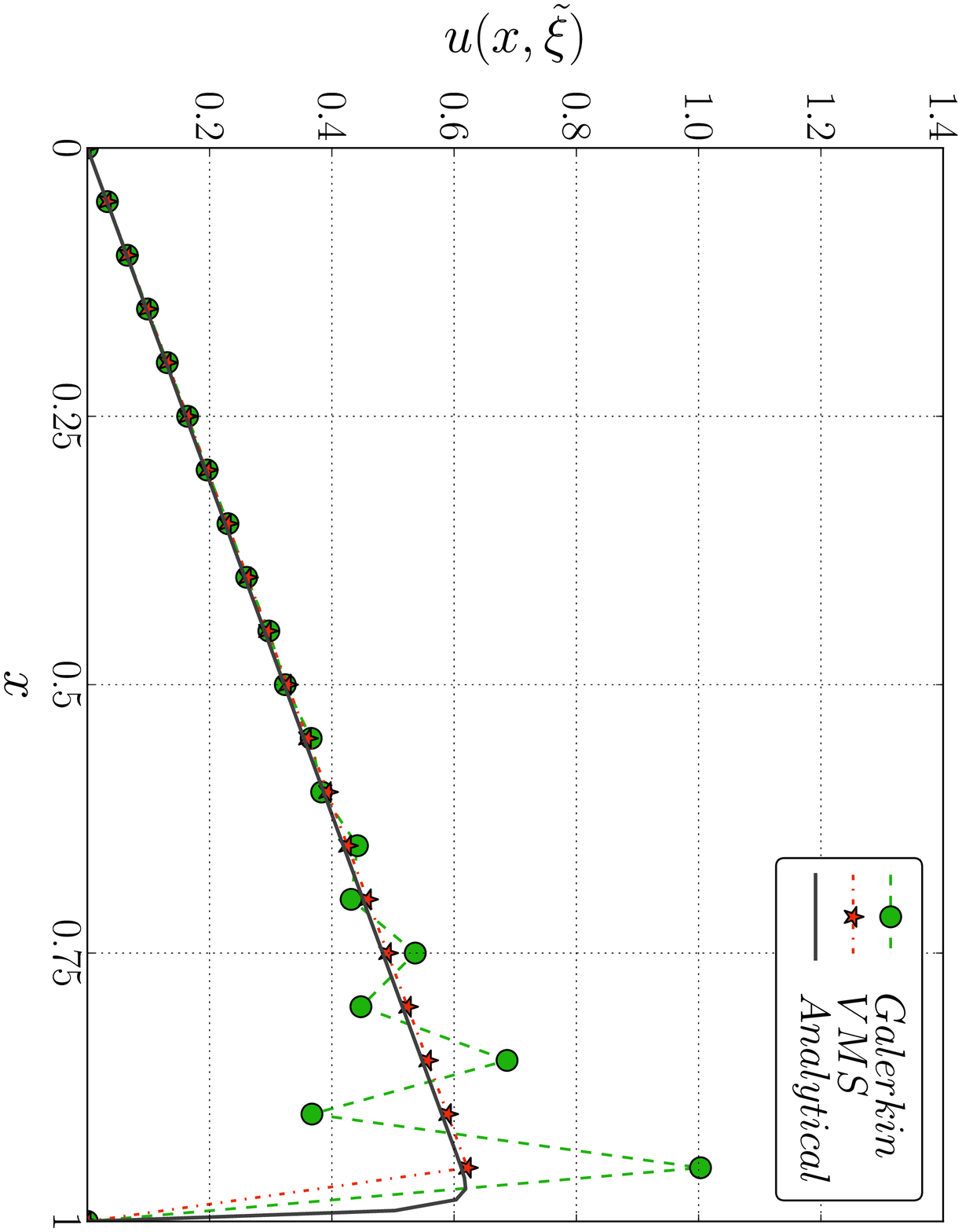}}
	\caption{The case of single random variable, with $L=1$,  $\kappa=10^{-3}$, and $f=1$.} 
	\label{fig:1RV_realization}
\end{figure}

If the advection field $\beta(x,\xi)$ in \eqref{eqn:ade_strong_form} is defined identical to \eqref{eqn:af_def}, it is possible to obtain a closed form solution for the stochastic function $u(x,\xi)$. In \fref{fig:1RV_surfPlot_ExactSoln} we plot the analytical solution for the case when the source term $f$ is a constant with no uncertainty. The realizations of the standard Galerkin and VMS solutions for the instance $\tilde{\xi}$ of the random variable $\xi$ is shown in \fref{fig:1RV_LinePlot_GalerkinVMS_realization}. Unlike the Galerkin method, the VMS solution doesn't exhibit any oscillations and is noticeably better. The mean and variance of the coarse-scale Galerkin and VMS solutions at each interior physical node are shown in \fref{fig:1RV_LinePlot_Mean} and \ref{fig:1RV_LinePlot_Variance}, respectively. We observe that in addition to producing a solution that is accurate for a single draw, the VMS solution produces accurate statistics of the solution. In  \fref{fig:1RV_linePlot_Error_OBSolnCof} we compare the errors in the gPC coefficients of the coarse-scale Galerkin and VMS solutions evaluated at the physical nodal points. We observe that the error in the coefficients for the VMS solution is smaller for most coefficients. In particular the maximum error in any of the coefficients for the VMS solution is about two orders of magnitude smaller than that for the Galerkin solution.

\begin{figure}[ht]
  \centering 
  \subcaptionbox{Mean \label{fig:1RV_LinePlot_Mean}}[0.49\linewidth]{\includegraphics[width=.4\linewidth,angle=90]{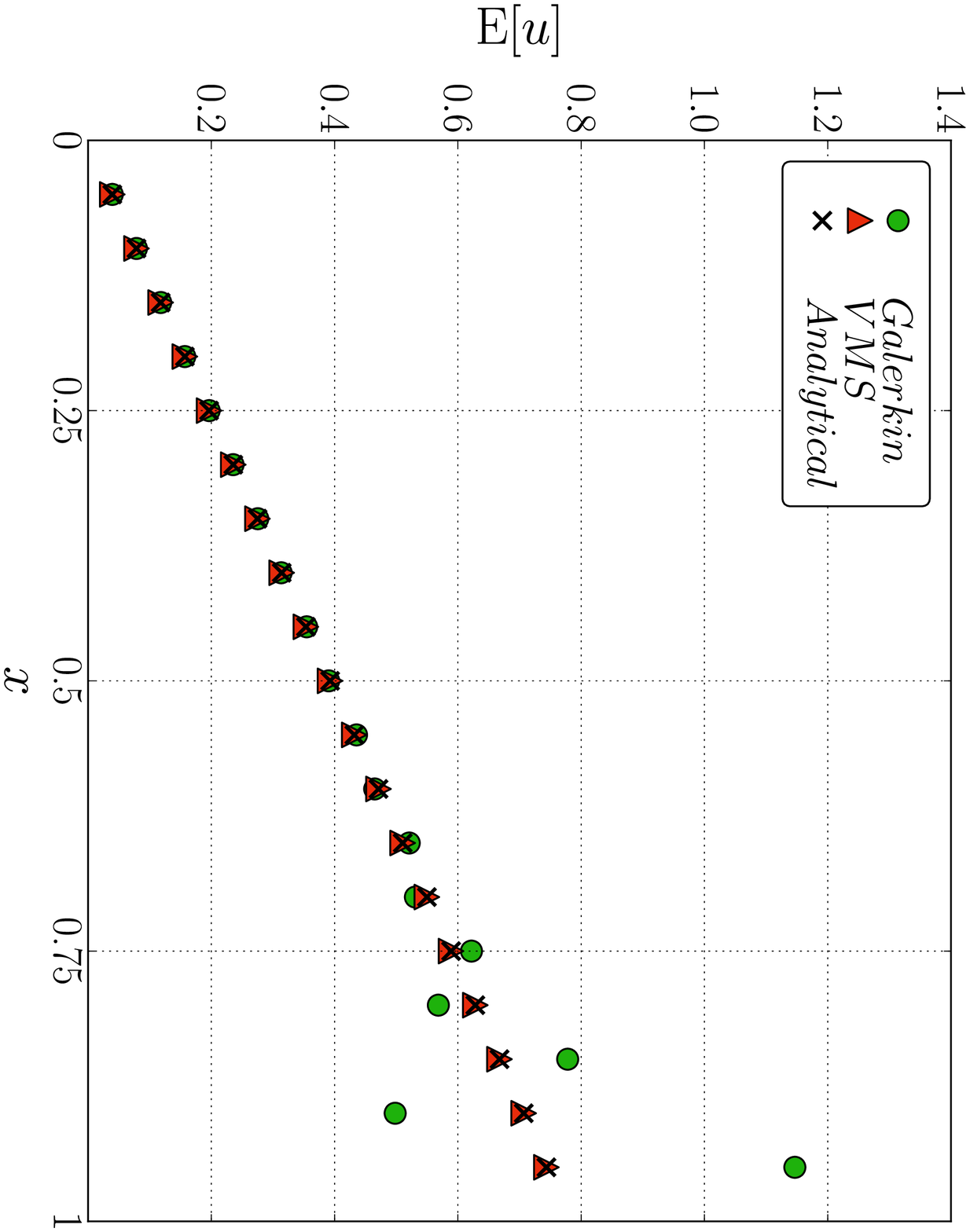}  }
  \subcaptionbox{Variance \label{fig:1RV_LinePlot_Variance} }[0.49\linewidth] {\includegraphics[width=.4\linewidth,angle=90]{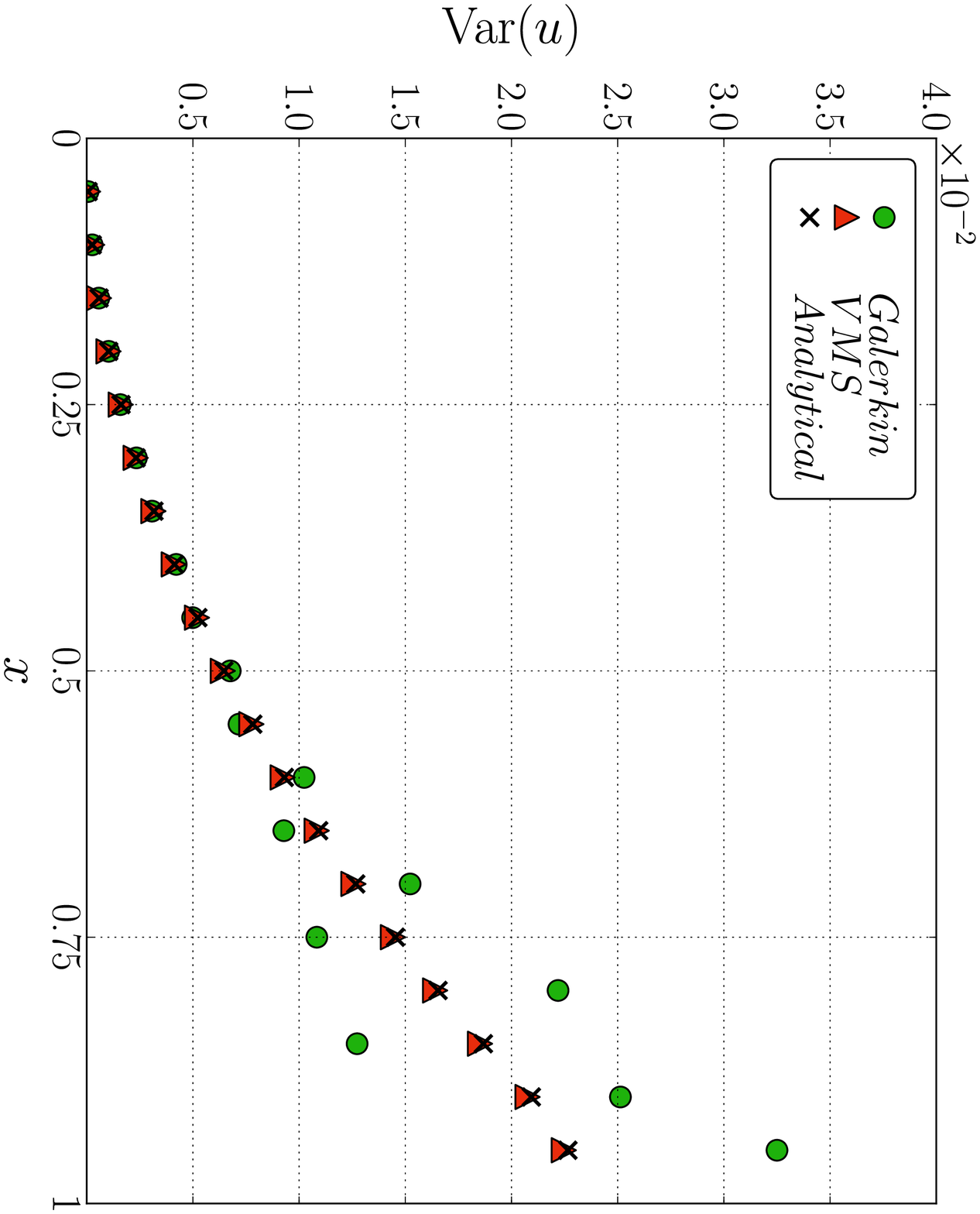}  }
   \caption{Statistical moments of the Galerkin and VMS solutions for the single random variable example.}
   \label{fig:1RV_stats}
\end{figure}

\begin{figure}[ht] 	
  	\centering \includegraphics[width=0.75\textwidth,angle=90]{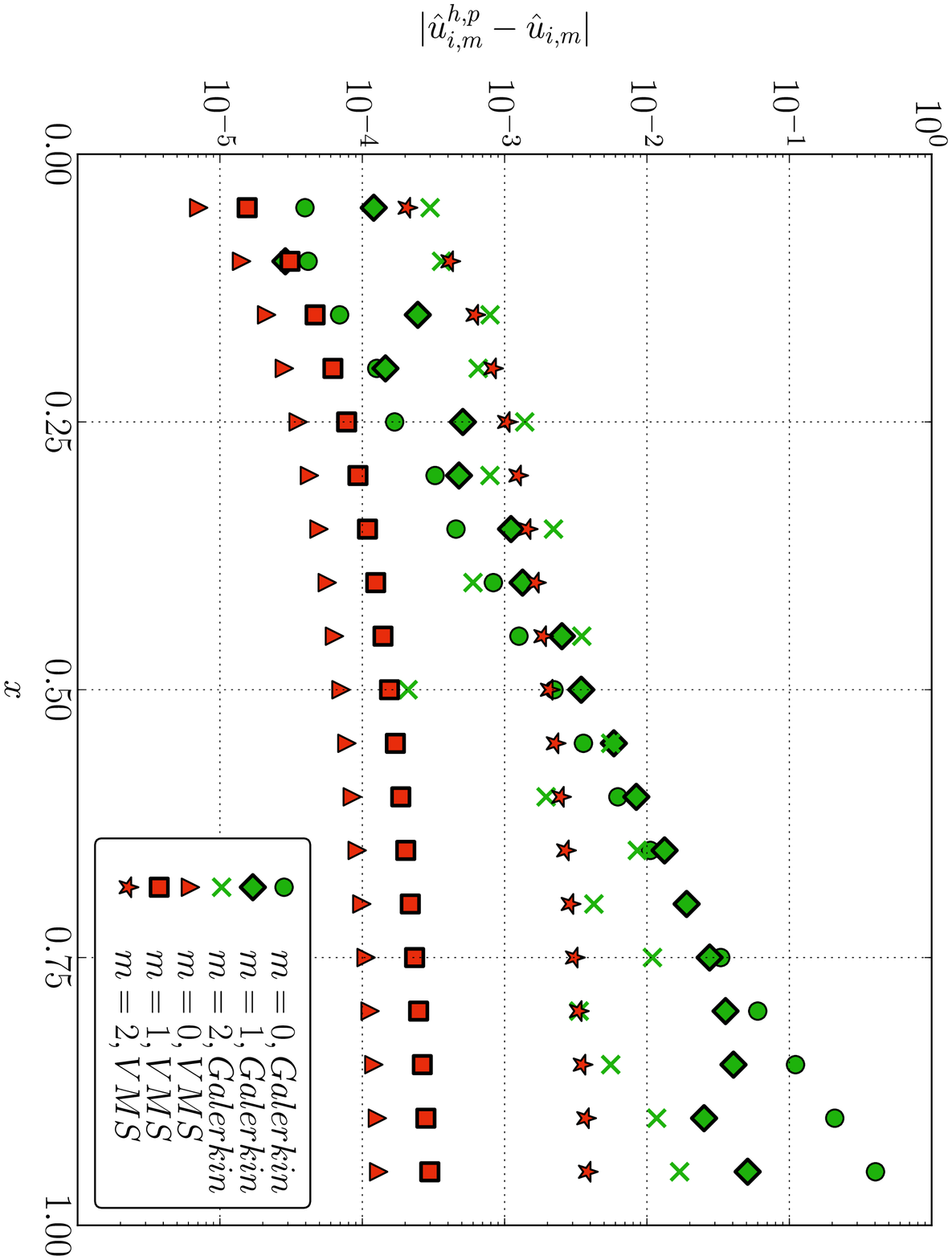}  	                                                          
  	\caption{Error of the gPC coefficients at nodal locations for the Galerkin and VMS method for the single random variable example.}
  	\label{fig:1RV_linePlot_Error_OBSolnCof}
\end{figure}

\subsubsection{A problem with multiple random variables} \label{subsubsec:nRV_results}

\begin{figure}[ht]
\centering 
  \subcaptionbox{Realization of the advection field \label{fig:nRV_LinePlot_advFieldRealization}}[0.49\linewidth]{\includegraphics[width=0.4\textwidth,angle=90]{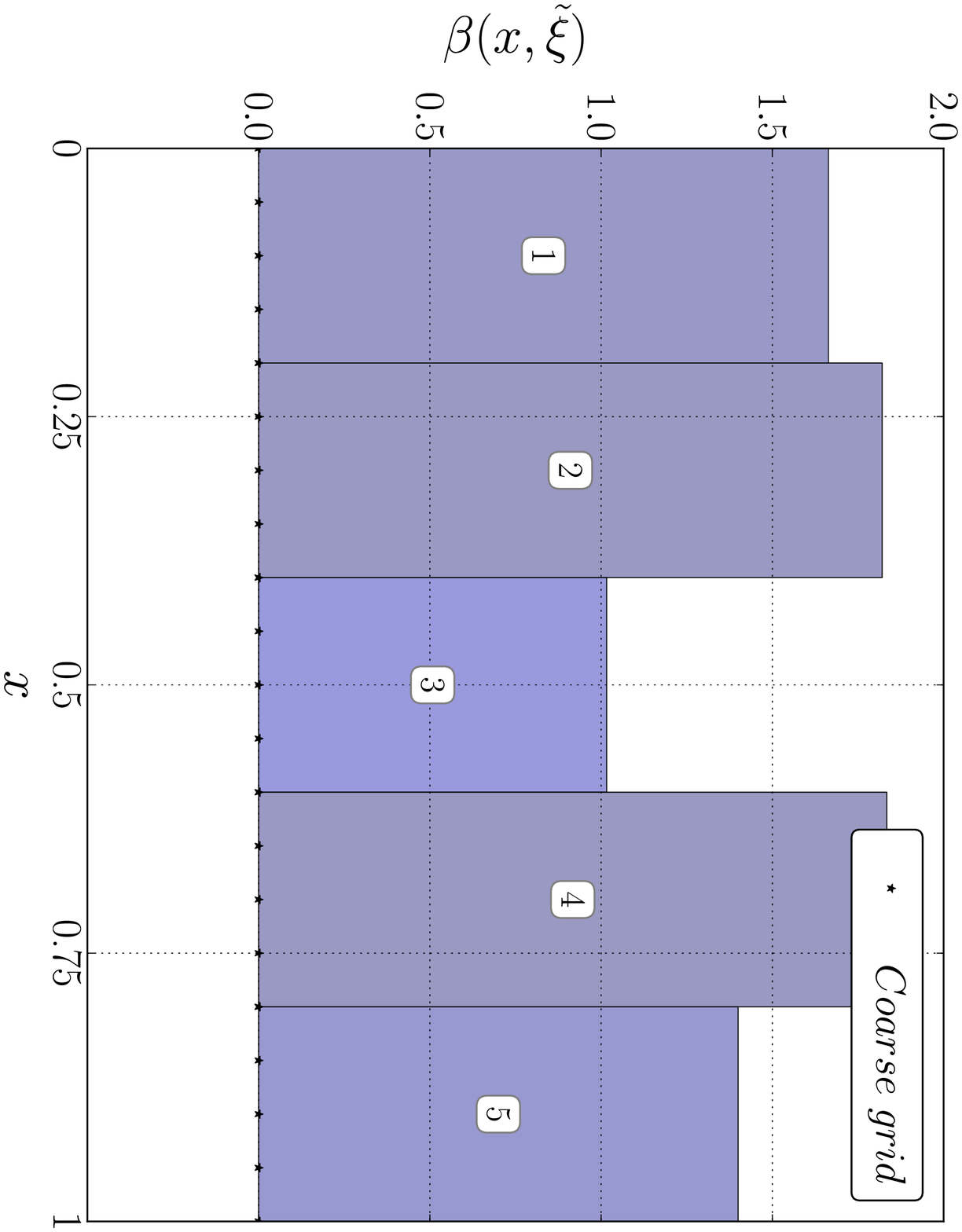} }
  \subcaptionbox{Realzation of the Galerkin and VMS solutions  \label{fig:nRV_LinePlot_Realization}}[0.49\linewidth]{\includegraphics[width=.4\linewidth,angle=90]{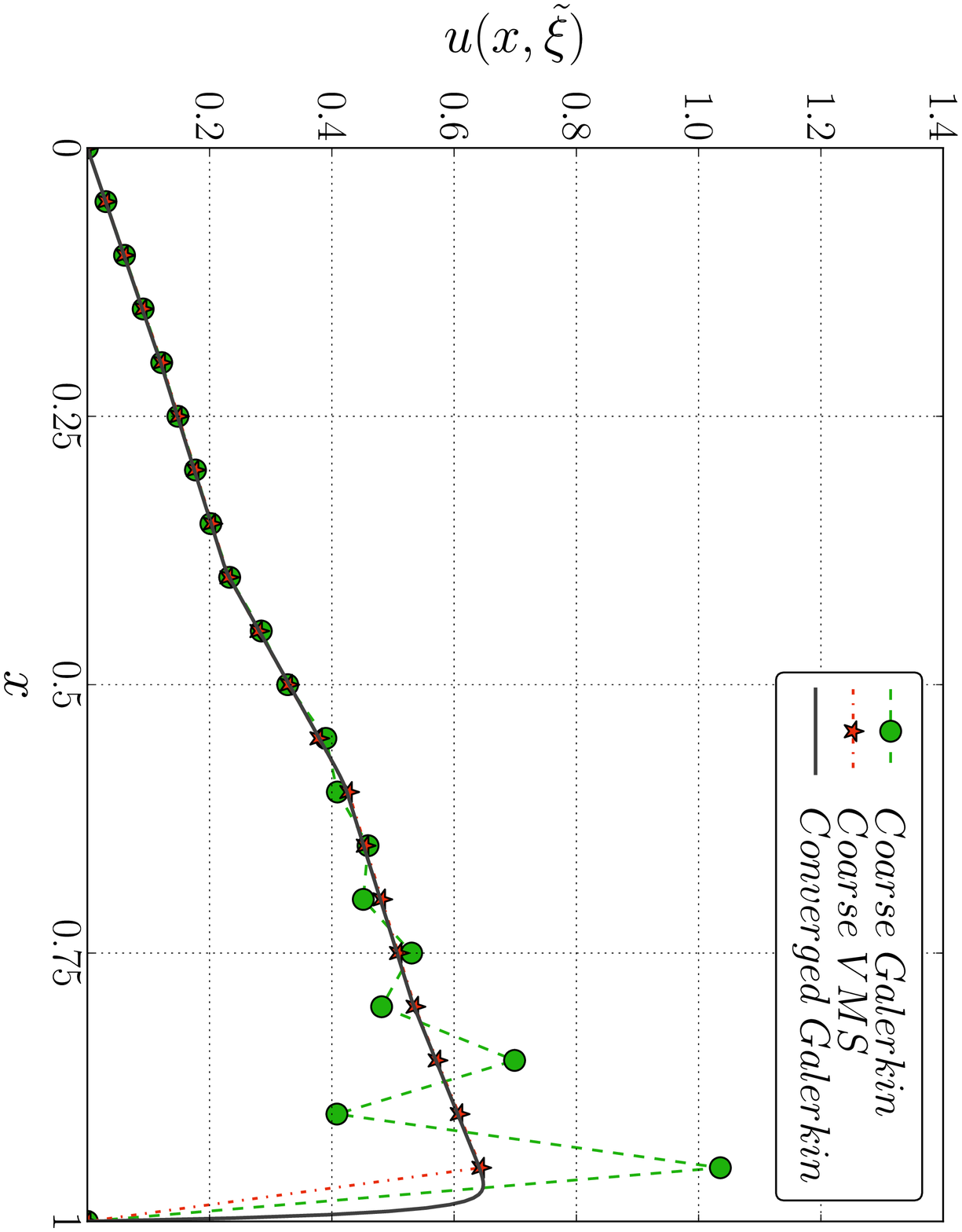}  }
  \caption{A multiple random variable example  with $L=1$,  $\kappa=10^{-3}$, $f=1$, a uniform grid of $n_{el}=20$ elements, and a total order of gPC $p=2$.}
  \label{fig:nRV_realization}
\end{figure}

\begin{figure}[ht]
\centering 
  \subcaptionbox{Realization of each discretization. \label{fig:nRV_LinePlot_Galerkin_Convergence} }[0.49\linewidth]{\includegraphics[width=0.4\textwidth,angle=90]{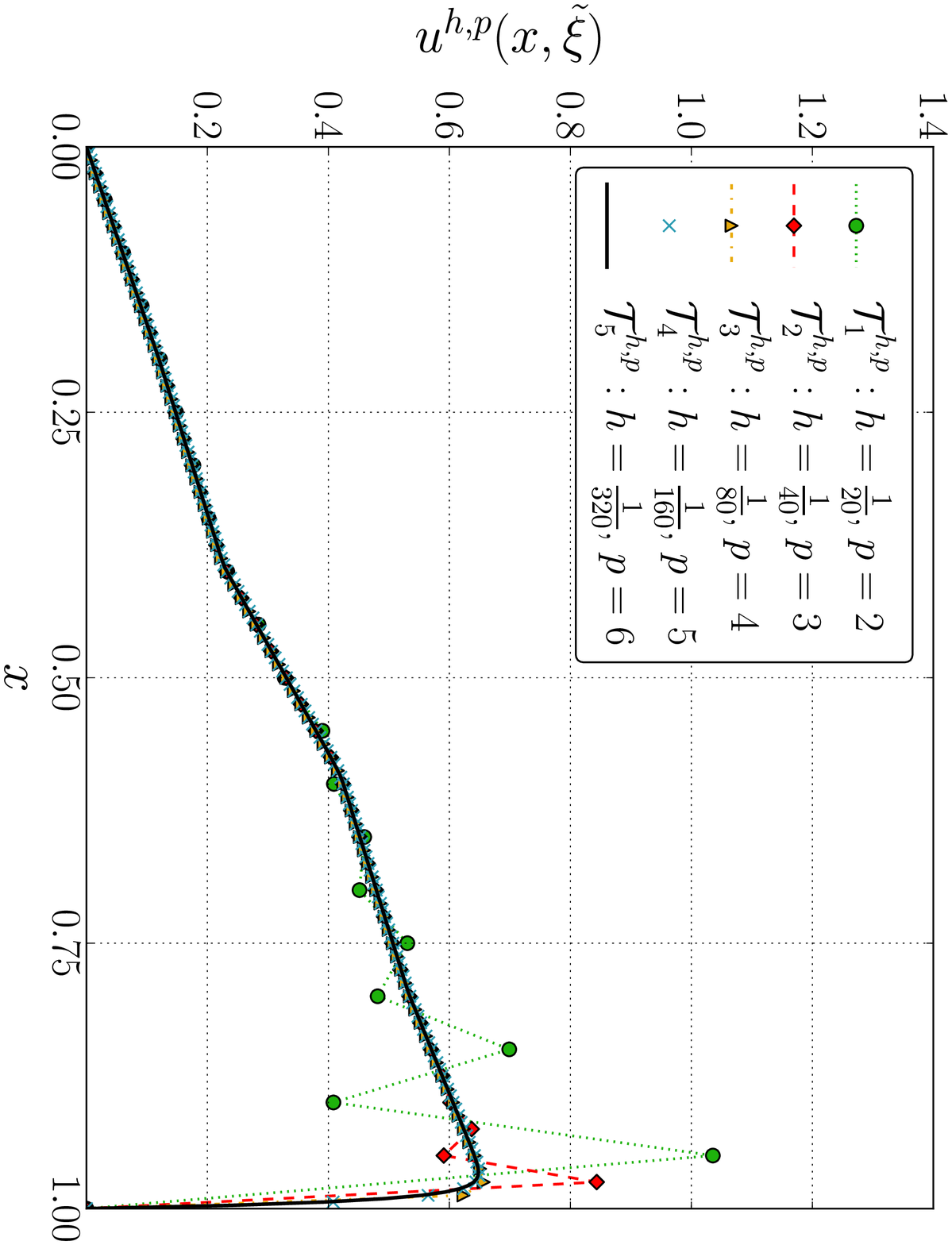} }
  \subcaptionbox{ Distance between successive discretizations measured in the norm of $\mathcal{V}$. \label{fig:nRV_H1Error_CS}}[0.49\linewidth]{\includegraphics[width=.4\linewidth,angle=90]{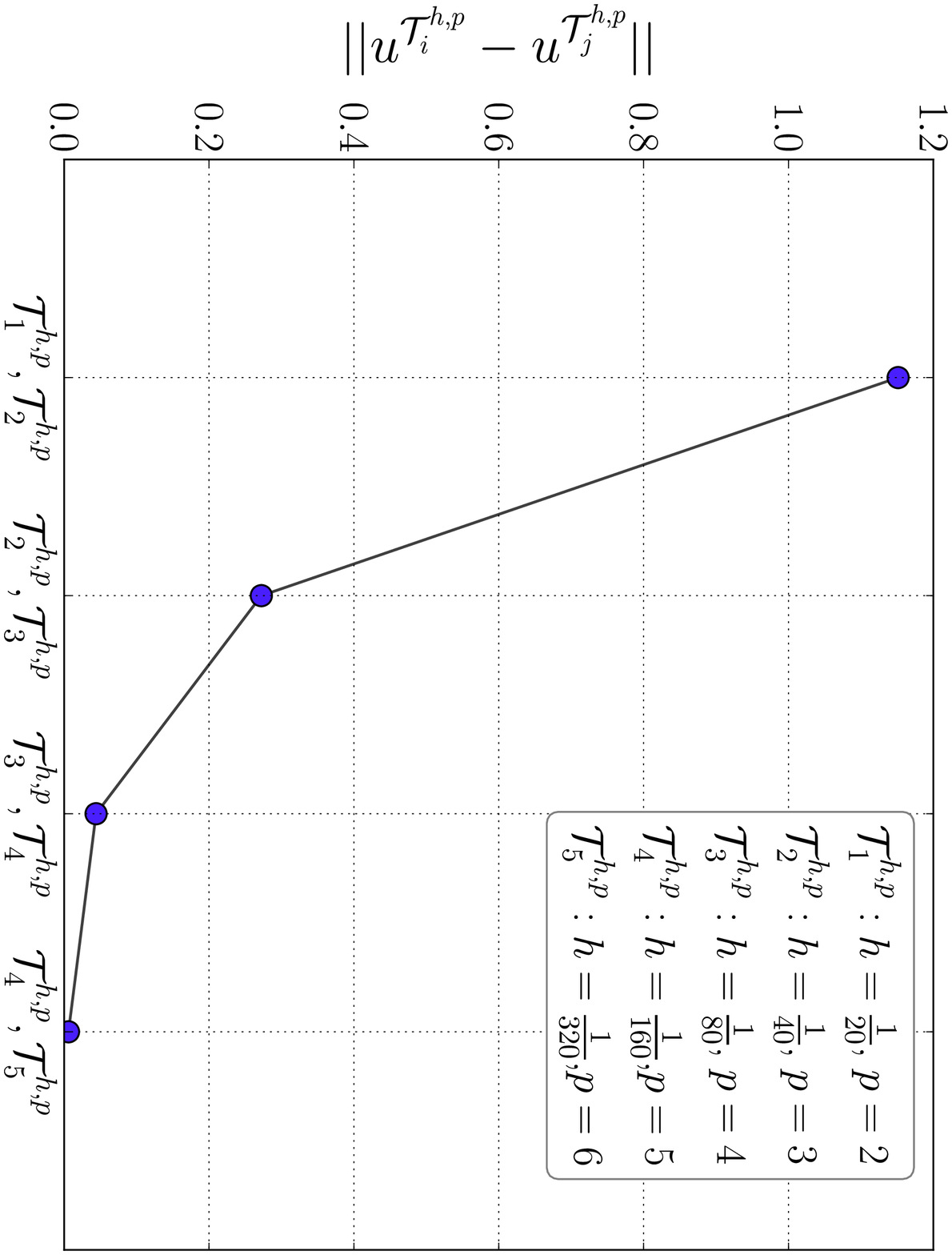}  }
  \caption{Convergence of the standard Galerkin method for the multiple random variable example.}
  \label{fig:nRV_convergence}
\end{figure}

\begin{figure}[ht]
  \centering 
  \subcaptionbox{Mean \label{fig:nRV_LinePlot_Mean}}[0.49\linewidth]{\includegraphics[width=.4\linewidth,angle=90]{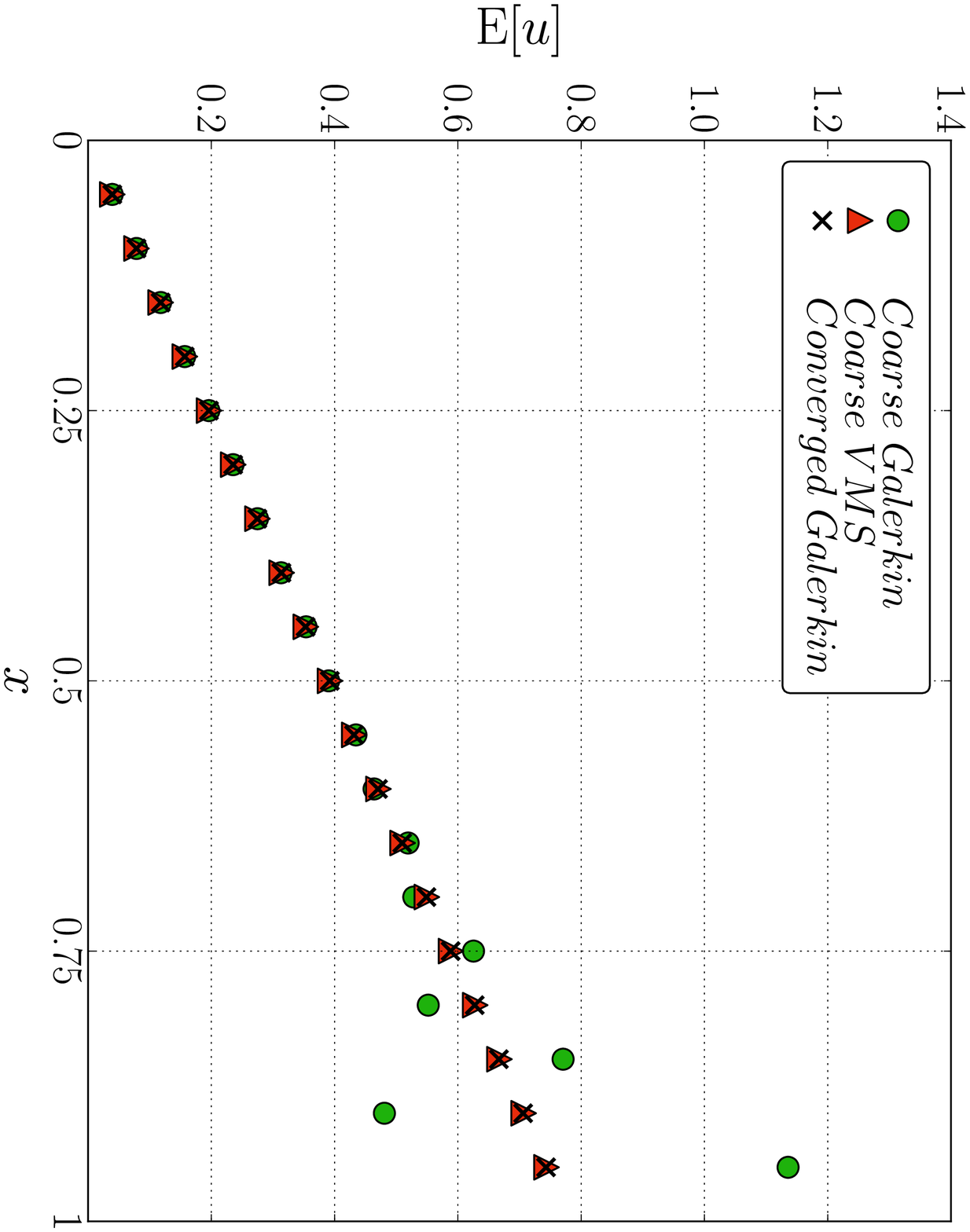}  }
  \subcaptionbox{Variance \label{fig:nRV_LinePlot_Variance}}[0.49\linewidth]{\includegraphics[width=.4\linewidth,angle=90]{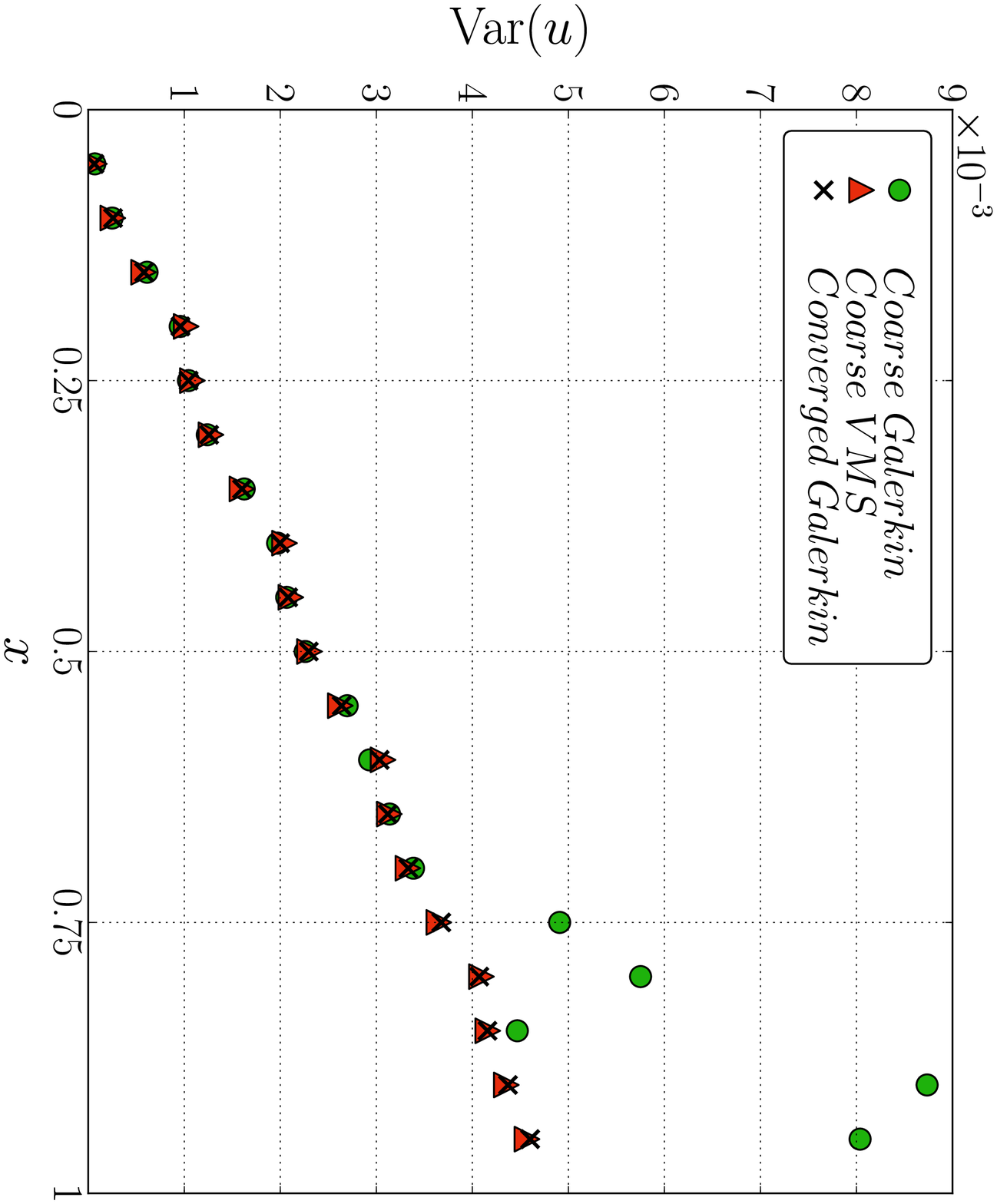}  }
  \caption{Statistical moments of the Galerkin and VMS solutions for the multiple random variable example.}
  \label{fig:nRV_stats}
\end{figure}

\begin{figure}[ht] 	
  	\centering \includegraphics[width=1.00\textwidth,angle=0]{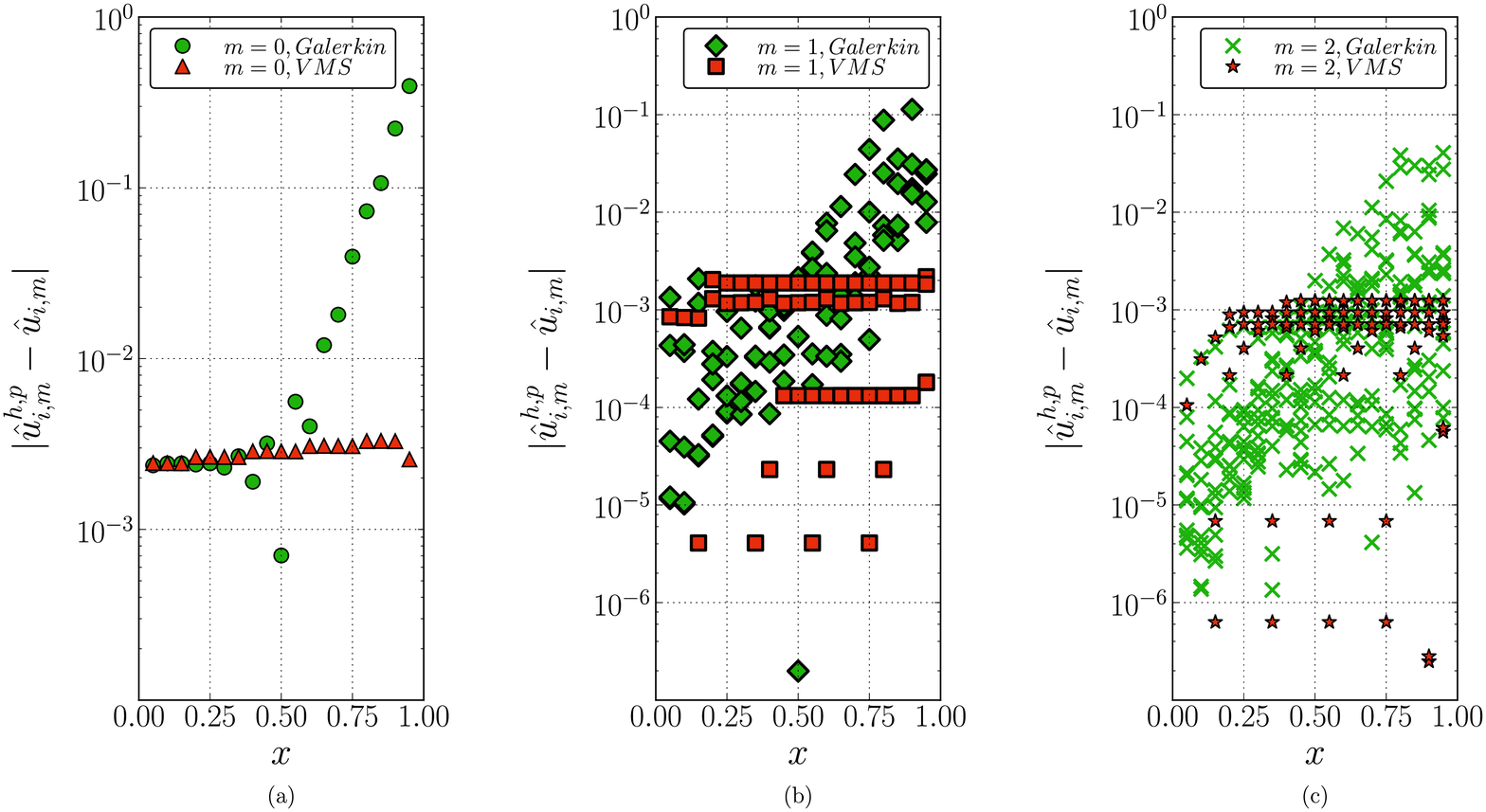}  	                                                          
  	\caption{Error in the nodal gPC coefficients of the Galerkin and VMS method for the multiple random variable example.  The error for a large majority of the coefficients of the VMS method aren't depicted in the plot, since they are significantly smaller and outside the limits of the ordinate.}
  	\label{fig:nRV_LinePlot_OBSolnCof}
\end{figure}

We consider the scenario when the advection field $\beta(x,\xi)$ is a function of multiple random variables. It is defined such that, in a particular region $(X^{min}_{k},X^{max}_{k})$ of the physical domain $\bar{\mathcal{D}}$,  it depends solely on the random variable $\xi_k$ which is assumed to be uniformly distributed in the interval $(0,1)$.  The random variables $\xi_k$ corresponding to different regions $k$ are assumed to be statistically independent. The exact functional dependence of the advection field on each $\xi_k$ is chosen identical to  \eqref{eqn:af_def}, {\it i.e.},
\begin{equation} 
\beta(x,{\xi}) = 
\begin{cases}
     1 + {\xi}_{1}^2 , \qquad & 0 \leq x < X^{max}_{1},  \quad \qquad \xi_1 \sim \mathcal{U}(0,1),\\
    \cdots \cdots \\
    1 + {\xi}_{k}^2 , \qquad & X^{min}_{k} \leq x < X^{max}_{k}, \quad \xi_k \sim \mathcal{U}(0,1),\\
    \cdots \cdots \\
    1 + {\xi}_{q}^2 , \qquad & X^{min}_{q} \leq x \leq L , \quad \qquad \xi_q \sim \mathcal{U}(0,1).
\end{cases} 
\end{equation}

In \fref{fig:nRV_LinePlot_advFieldRealization}, we plot the realization of the advection field for the instance $\tilde{\xi}$ of the random vector $\xi$. As is apparent from the plot, there are five random variables and the advection field is discontinuous across the physical domain. Such a situation can arise for instance, in species transport through random porous media with independent layers. Considering once again the case of a constant source term, and for the same instance $\tilde{\xi}$, in  \fref{fig:nRV_LinePlot_Realization} we plot the  Galerkin and VMS solutions obtained using the coarse scales, along with the converged Galerkin solution. Once again we observe that the VMS solution is almost nodally exact, whereas the Galerkin solution (for the same total order of gPC) is polluted by spurious oscillations.

The veracity of the converged Galerkin solution is established using a refinement study and is depicted in \fref{fig:nRV_convergence}. In particular, \fref{fig:nRV_LinePlot_Galerkin_Convergence} is a plot of the realization of the standard Galerkin solution for the instance  $\tilde{\xi}$ for five different discretizations, and in \fref{fig:nRV_H1Error_CS} we plot the distance between successively finer discretizations as measured in the $\mathcal{V}$ norm. It is apparent that the Galerkin method converges, and henceforth the solution obtained from the discretization $\mathcal{T}^{h,p}_5, h=\frac{1}{320},p=6$ will be referred to as the converged solution.

In \fref{fig:nRV_LinePlot_Mean} and \ref{fig:nRV_LinePlot_Variance} we compare the mean and the variance, respectively, of the coarse-scale Galerkin and VMS solutions with that of the converged solution. It is evident that even for this multi-dimensional example the statistics of the numerical solution are better captured with the VMS method. It is remarkable that they are almost nodally exact. 

In \fref{fig:nRV_LinePlot_OBSolnCof} we plot the error of the individual gPC coefficients of the coarse-scale Galerkin and VMS solution; the error being computed with respect to the converged solution. It is essential to point out that, the error for a large majority of the coefficients of the VMS method aren't depicted in the plot, since they are significantly smaller and outside the limits of the ordinate. At each order of the gPC, the maximum error in the VMS coefficients is around two orders of magnitude smaller that the corresponding Galerkin coefficients.

It is interesting to consider the drastic reduction in the size of the problem obtained by adopting the VMS method. The converged Galerkin solution obtained using the discretization  $\mathcal{T}^{h,p}_5, h=\frac{1}{320},p=6$ gives rise to a linear system with $147,378$ unknowns. In comparison, the VMS solution (which is satisfactory for computing the mean, variance and the gPC coefficients up to the second order) is obtained with the coarse-scale discretization $\mathcal{T}^{h,p}_1, h=\frac{1}{20},p=2$ contains $399$ unknowns, which is smaller by a factor of $\approx 370$. In our view this makes a compelling argument to include the VMS method in stochastic Galerkin solution approaches.

\section{Conclusions} \label{sec:conclusions}
We have presented the variational multiscale method for PDEs with stochastic coefficients and source terms. We have used it as a method for generating accurate coarse-scale solutions while accounting for the effect of the missing scales through a model term that contains a fine-scale stochastic Green's function.

We have analyzed the fine-scale Green's function for stochastic PDEs, and demonstrated that it is closely related to its deterministic counterpart when the projector used to define the ``optimal'' coarse-scale solution is induced by the natural inner-product of the space of trial-solutions, and the basis functions for the space of random variables are $L^2$-orthogonal.  In particular, we have
\begin{inparaenum}[(i)] 
\item shown that at the physical locations where the fine-scale deterministic Green's function vanishes, the fine-scale stochastic function satisfies a weaker, and discrete notion of vanishing stochastic coefficients, and 
\item derived an explicit formula for the fine-scale stochastic Green's function that only involves quantities needed to evaluate the fine-scale deterministic Green's function.
\end{inparaenum}
By making use of the first observation, we have argued that the fine-scale Green's function may be reasonably approximated by the element Green's function. This results in a tremendous simplification in the VMS formulation of stochastic PDEs. We have tested the consequence of making this approximation for the stochastic advection-diffusion problem and noted that the resulting VMS method is more accurate than the standard stochastic Galerkin approach.

We note that there are several interesting ideas to pursue that are related to the topic of this manuscript. First, the VMS integral involving $\tau$, the stabilization parameter in \eqref{eqn:tau_g_relation}, which is a complicated function of the stochastic parameters, can be computationally challenging. Thus approximation for $\tau$ that are based on a polynomial expansion might be very useful in lowering these costs. Second, the VMS method provides an estimate for $u'$, which can be thought of as an approximation to the error in the coarse-scale solution. An interesting avenue of research would be to develop an adaptive (in physical and stochastic spaces) refinement strategy based on this error indicator \cite{Hauke06}.

\section{Acknowledgements}
Assad Oberai and Jayanth Jagalur-Mohan gratefully acknowledge financial support by the CCNI/IBM research projects.  Onkar Sahni gratefully acknowledges the financial support of the Department of Energy, Office of Science's SciDAC-III Institute, as part of the Frameworks, Algorithms, and Scalable Technologies for Mathematics (FASTMath) program, under grant DE-SC0006617. Alireza Doostan gratefully acknowledges the financial support of the Department of Energy under Advanced Scientific Computing Research Early Career Research Award DE-SC0006402.

\bibliographystyle{plain}
\bibliography{bibtex_citations}
\end{document}